\theoremstyle{plain}
\newtheorem{theorem}{Theorem}[section]
\newtheorem{corollary}[theorem]{Corollary}
\theoremstyle{definition}
\newtheorem{definition}[theorem]{Definition}
\newtheorem*{remark}{Remark}
\newcommand{\R}{\mathbb{R}}
\newcommand{\N}{\mathbb{N}}
\newcommand{\de}{\delta}
\newcommand{\markov}{Q}
\newcommand{\pnorm}{p}
\newcommand{\pcell}{k}
\newcommand{\kcell}{j}
\newcommand{\Deg}{\operatorname{Deg}}
\newcommand{\diam}{\operatorname{diam}}
\newcommand{\sgn}{\operatorname{sgn}}
\newcommand{\less}{\lessdot}
\newcommand{\more}{\gtrdot}
\newcommand{\eps}{\varepsilon}
\newcommand{\Hidden}[1]{}
\newcommand{\Ri}{R}
\DeclareMathOperator{\Tr}{Tr}
\newcommand{\opt}{J}
\begin{document}

\title{Characterizations of Forman curvature}
\author{
~~~~~J\"urgen Jost\footnote{MPI MiS Leipzig, jost@mis.mpg.de}~~~~~
%~~~~~
Florentin M\"unch\footnote{MPI MiS Leipzig, muench@mis.mpg.de}%~~~~~Christian Rose\footnote{MPI Leipzig, crose@mis.mpg.de}
}
\date{\today}
\maketitle

\begin{abstract}
We characterize Forman curvature lower bounds via
contractivity of the Hodge Laplacian semigroup.
We prove that Ollivier and Forman curvature coincide on edges when maximizing the Forman curvature over the choice of 2-cells.
To this end, we translate between 2-cells and transport plans.
Moreover, we give improved diameter bounds.
We explicitly warn the reader that our Forman curvature notion does not coincide with Forman's original definition, but can be seen as generalization of the latter one. 
 
\end{abstract}

%%%%%%%%%%%%%%
%\section{Introduction}
%%%%%%%%%%%%%%
\tableofcontents

\newpage

%\chapter{Foundations}

\section{Introduction}
Among a not further specified group of mathematicians, the leading opinion was that the only purpose of Forman curvature is to compute at least some curvature of a huge real network when all other curvature notions are not feasible for computation due to a lack of computing power. In this article, we prove this opinion wrong.

Our first result is a characterization of lower bounds to the Forman curvature $F$ via the semigroup generated by the Hodge Laplacian $H:=\de \de^* + \de^*\de$ on a cell complex $X$ with coboundary operator $\de$. The adjoint $\delta^*$ is taken with respect to a cell weight $m$.
The \emph{Forman curvature} $F:X \to \R$ is defined as
\[
F(x):= Hx(x) - \sum_{y} |Hy(x)|
\]
with a cell $x \in X$ identified with $1_x \in C(X)$.
The definition resembles the Weitzenböck decomposition $F=H-\Delta_B$ where $\Delta_B$ is the Bochner Laplacian, and $F$ is interpreted as a multiplication operator. The decomposition is well defined by requiring that $\Delta_B$ is minimally diagonally dominant which is equivalent to a maximum principle, see Section~\ref{sec:diagDom}. 
We explicitly warn the reader that our definition of Forman curvature slightly differs from Forman's original definition in the case of weighted cell complexes. In Section~\ref{sec:OriginalForman}, we explain how to interpret our definition as a generalization of Forman's definition.

We now give the semigroup characterization of lower bounds for the Forman curvature.
\begin{theorem}
Let $(X,\de,m)$ be a cell complex, let $\pcell \in \N$ and $R \in \R$.
The following statements are equivalent:

\begin{enumerate}[(i)]
\item $F(x) \geq R$ for all $\pcell$-cells $x \in X_\pcell$,
\item $\|e^{-tH} f\|_\pnorm \leq e^{-Rt} \|f\|_\pnorm$ for all $f \in C(X_\pcell)$ and all $\pnorm \in [1,\infty]$.
\end{enumerate}
\end{theorem}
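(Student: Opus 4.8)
The plan is to first reduce to the case $R=0$ by the elementary observation that $\|e^{-tH}f\|_\pnorm \le e^{-Rt}\|f\|_\pnorm$ for all $t\ge 0$ is equivalent to $\|e^{-t(H-R)}f\|_\pnorm \le \|f\|_\pnorm$, since $e^{-t(H-R)}=e^{Rt}e^{-tH}$. Under this shift the diagonal entries drop by $R$ while the off-diagonal entries are unchanged, so $F(x)\ge R$ becomes $F_{H-R}(x)\ge 0$, i.e. the statement that $H-R$ is diagonally dominant in the Gershgorin sense $(H-R)x(x)\ge \sum_{y\neq x}|(H-R)y(x)|$. Thus it suffices to prove: $H-R$ is diagonally dominant if and only if $e^{-t(H-R)}$ is a contraction on every $\|\cdot\|_\pnorm$. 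A second structural reduction comes from self-adjointness: since $H$ is $m$-self-adjoint, the matrix entries satisfy $m(x)|Hy(x)|=m(y)|Hx(y)|$, and $e^{-t(H-R)}$ is $m$-self-adjoint, so its operator norm on the weighted $\ell^1$ coincides with its operator norm on $\ell^\infty$. Hence it is enough to control the case $\pnorm=\infty$; the case $\pnorm=1$ follows by duality, and the intermediate $\pnorm\in(1,\infty)$ by Riesz--Thorin interpolation.

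For the direction (i)$\Rightarrow$(ii) I would run a maximum-principle argument for the flow $\dot u=-(H-R)u$ with $u(t)=e^{-t(H-R)}f$. Set $M(t):=\|u(t)\|_\infty$ and let $x_0$ be a cell where $|u(t,\cdot)|$ attains its maximum. A direct computation gives $\operatorname{Re}\bigl[\overline{\sgn u(x_0)}\,((H-R)u)(x_0)\bigr]\ge |u(x_0)|\bigl((H-R)x_0(x_0)-\sum_{y\neq x_0}|(H-R)y(x_0)|\bigr)\ge 0$, using $|u(y)|\le|u(x_0)|$ and diagonal dominance in the last step. This shows the upper Dini derivative of $M$ is nonpositive, so $M$ is nonincreasing and $\|e^{-t(H-R)}f\|_\infty\le\|f\|_\infty$; equivalently, $-(H-R)$ is dissipative on $\ell^\infty$ in the Lumer--Phillips sense. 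Combined with the self-adjoint duality and interpolation noted above, this yields (ii) for all $\pnorm$.

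For (ii)$\Rightarrow$(i) I would use only the case $\pnorm=\infty$ together with a single well-chosen test function. Fix a $\pcell$-cell $x_0$ and define $f\in C(X_\pcell)$ by $f(x_0):=1$ and $f(y):=-\overline{\sgn(Hy(x_0))}$ for $y\neq x_0$, so that $\|f\|_\infty=1$ and
\[
(Hf)(x_0)=\sum_y f(y)\,Hy(x_0)=Hx_0(x_0)-\sum_{y\neq x_0}|Hy(x_0)|=F(x_0).
\]
The first-order expansion then gives $(e^{-tH}f)(x_0)=1-tF(x_0)+O(t^2)$, which is positive for small $t>0$, so the pointwise bound $|(e^{-tH}f)(x_0)|\le \|e^{-tH}f\|_\infty\le e^{-Rt}=1-Rt+O(t^2)$ forces $-F(x_0)\le -R$, i.e. $F(x_0)\ge R$.

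I expect the main obstacle to be the passage from the infinitesimal, pointwise diagonal-dominance inequality to genuine contractivity for all $t\ge 0$ in the direction (i)$\Rightarrow$(ii): this is exactly where $F\ge R$ is used, and it requires the maximum-principle/Lumer--Phillips step to be carried out cleanly (including the justification that $M(t)$ is differentiable from the right and that the maximizing cell may be chosen measurably). The remaining care is bookkeeping: verifying that the weighted self-adjointness relation genuinely identifies the $\ell^1$ and $\ell^\infty$ norms of the semigroup, that Riesz--Thorin applies to the weighted spaces, and that the generator/semigroup framework (existence of $e^{-tH}$, differentiability at $t=0$) is in force, all of which I would take from the setup of the earlier sections.
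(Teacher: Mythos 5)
Your proposal is correct, and its overall architecture matches the paper's: you reduce $F(x)\ge R$ to diagonal dominance of the shifted operator $H-R$ (the paper does this via its general Theorem on diagonally dominant operators, using that $F=D(H)$, i.e.\ $H-F$ is minimally diagonally dominant), and your direction (ii)$\Rightarrow$(i) uses exactly the paper's test function $f(x_0)=1$, $f(y)=-\sgn(Hy(x_0))$ with a derivative at $t=0$. The genuine divergence is in (i)$\Rightarrow$(ii): the paper proves contractivity for every $\pnorm\in[1,\infty)$ in one stroke by computing
\[
-\partial_t\|e^{-tH}f\|_\pnorm^\pnorm=\langle Hg,\phi(g)\rangle,\qquad \phi(s)=|s|^{\pnorm-1}\sgn(s),
\]
splitting off the diagonal part $D$ and symmetrizing the off-diagonal sum into nonnegative pairs $\bigl(g(x)+\sigma_{xy}g(y)\bigr)\bigl(\phi(g(x))+\sigma_{xy}\phi(g(y))\bigr)$, and then obtains $\pnorm=\infty$ by continuity of the norms in $\pnorm$; you instead prove only the endpoint $\pnorm=\infty$ by a maximum-principle/Dini-derivative argument, transfer it to $\pnorm=1$ by $m$-self-adjoint duality, and fill in $\pnorm\in(1,\infty)$ by Riesz--Thorin. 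Both routes are sound (the weighted $\ell^\pnorm$ spaces are just $L^\pnorm$ of the measure $m$, and complexification does not change the $\ell^1\to\ell^1$ or $\ell^\infty\to\ell^\infty$ norms, so interpolation with constant $1$ applies). Your version is more modular and leans on standard semigroup/interpolation machinery; the paper's direct computation is self-contained, gives the monotonicity of each $\pnorm$-norm individually without invoking interpolation, and is essentially the same calculation that powers its equivalence with the pointwise maximum principle (its condition (iv)), which your endpoint argument reproduces in expanded form.
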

The theorem reappears as Theorem~\ref{thm:HodgeSemigroup}.
This is an important contribution as 
Forman curvature is the only discrete Ricci curvature notion, for which no semigroup characterization was known.
Our main result is that Forman curvature and Ollivier curvature coincide when maximizing the Forman curvature over the weights of the 2-cells.
The \emph{Ollivier curvature} $\kappa:X \to \R$ 
introduced in \cite{ollivier2009ricci} 
can be defined (compare \cite{munch2017ollivier}) as
\[
\kappa(x):= \inf_{\substack{\de f(x)=1\\|\de f| \leq 1}} \de \de^*\de f(x).
\]

The relation between Forman and Ollivier curvature has been studied in \cite{loisel2014ricci,tee2021enhanced}, and coincidence has been shown recently in certain special cases in \cite{tee2021enhanced}. We show coincidence in full generality. 

\begin{theorem}\label{thm:KequalsF}
Let $G=(X,\de,m)$ be a 1-dimensional cell complex. %, i.e., a one-dimensional weighted regular CW complex. 
%Let $x=(v,w)$ be an edge for some $v,w \in V$. %, i.e., a one-cell in the CW complex.
Then for $x \in C(X_1)$,
\begin{align*}
\kappa(x) = \max_K F_K(x)
\end{align*}
where the maximum is taken over all cell complexes $K$ having $G$ as 1-skeleton.
\end{theorem}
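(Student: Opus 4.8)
The plan is to cast both curvatures into a single variational mould and then match them by linear‑programming duality, with the $2$‑cells of $K$ playing exactly the role of transport plans.

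\medskip
\textbf{A variational formula for $F$ and the easy inequality.} Write $H=H_K$ for the Hodge Laplacian of an extension $K$ and abbreviate its matrix entries by $H(x,y):=Hy(x)$, so that $F_K(x)=H(x,x)-\sum_{y\neq x}\abs{H(x,y)}$ is the diagonal entry of $H$ minus its off‑diagonal $\ell^1$‑mass. Expanding $(H\phi)(x)=H(x,x)\phi(x)+\sum_{y\neq x}H(x,y)\phi(y)$ and choosing the signs $\phi(y)=-\sgn H(x,y)$ shows that this expression is in fact a minimum,
\begin{equation}\label{eq:Fvar}
F_K(x)=\min_{\phi(x)=1,\ \|\phi\|_\infty\leq 1}(H\phi)(x).
\end{equation}
On a $1$‑dimensional skeleton $H=\de\de^*+\de^*\de$, where the down‑part $\de\de^*$ is determined by $G$ alone while the up‑part $\de^*\de$ is determined by the $2$‑cells of $K$. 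The key observation is that the up‑part annihilates gradients: if $\phi=\de f$ then $\de\phi=\de\de f=0$, so $(\de^*\de\phi)(x)=0$ and $(H\phi)(x)=(\de\de^*\de f)(x)$ no matter which $2$‑cells $K$ carries. Inserting any $f$ admissible for the Ollivier problem (so $\de f(x)=1$ and $\abs{\de f}\leq 1$) into \eqref{eq:Fvar} therefore gives $F_K(x)\leq(\de\de^*\de f)(x)$; taking the infimum over such $f$ and then the maximum over $K$ yields the easy half $\max_K F_K(x)\leq\kappa(x)$.

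\medskip
\textbf{The hard inequality via transport plans.} For the reverse bound I would read \eqref{eq:Fvar} together with the maximization over $K$ as one saddle‑point problem. Denoting by $[c:e]\in\{0,\pm1\}$ the incidence of an edge $e$ in a $2$‑cell $c$, the up‑part contributes
\[
(\de^*\de\,\phi)(x)=\sum_{c}\frac{m(c)}{m(x)}\,[c:x]\,(\de\phi)(c),
\]
a nonnegatively weighted combination of signed integrals $(\de\phi)(c)$ of $\phi$ around the cycles through $x$. These are precisely the Lagrange multipliers enforcing the gradient constraint $\de\phi=0$ along cycles through $x$, so that allowing arbitrary $2$‑cells of arbitrary weight dualizes the constraint ``$\phi$ is a gradient'' in \eqref{eq:Fvar}. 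Concretely I would start from an optimal coupling $\pi$ for the transport form of $\kappa(x)$ --- for the edge $x$ with endpoints $u,v$ one has $(\de\de^*\de f)(x)=(\de^*\de f)(v)-(\de^*\de f)(u)$ with $\de^*\de f$ the vertex Laplacian, so $\kappa(x)$ is the usual $1-W_1(\mu_u,\mu_v)$ --- and turn each geodesic carrying mass $\pi(a,b)$ from a neighbour $a$ of $u$ to a neighbour $b$ of $v$, closed up through the edge $x$, into a $2$‑cell of weight proportional to $\pi(a,b)$. The marginal constraints of $\pi$ become the Lipschitz constraints $\abs{\de f}\leq 1$, the transport cost becomes the off‑diagonal mass $\sum_{y\neq x}\abs{H(x,y)}$, and the resulting complex satisfies $F_K(x)=\kappa(x)$. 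Strong LP duality (the problem is a finite linear program once restricted to a neighbourhood of $x$) guarantees a finitely supported optimal plan, hence a genuine cell complex $K$.

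\medskip
\textbf{Main obstacle.} The crux is the bookkeeping in the transport construction: one must show that at the optimal plan the down‑contribution $\de\de^*$ and the up‑contribution $\de^*\de$ reach each neighbouring edge $y$ with \emph{consistent signs}, so that the absolute values in \eqref{eq:Fvar} produce no spurious cancellation and $\sum_{y\neq x}\abs{H(x,y)}$ collapses to exactly the transport cost $1-\kappa(x)$. Equivalently, one must verify complementary slackness between the optimal coupling and the optimal Lipschitz potential, and confirm the locality claim that cycles through $x$ already span every gradient constraint seen by the local objective $(H\phi)(x)$. I expect this sign‑and‑slackness analysis --- the honest content of ``translating between $2$‑cells and transport plans'' --- to be the main difficulty; once this dictionary is in place, both inequalities follow readily.
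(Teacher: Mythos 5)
Your easy inequality is correct and complete: the variational formula $F_K(x)=\min\{(H\phi)(x):\phi(x)=1,\ \|\phi\|_\infty\le 1\}$ is exactly the right way to see $F_K(x)\le\kappa(x)$ for every extension $K$ (this is Corollary~\ref{cor:FleqKappa} in the paper), and your strategy for the converse --- turning an optimal coupling into cycle weights --- is the same as the paper's second, ``alternative'' proof of Corollary~\ref{cor:KequalsF}. (The paper's first proof instead runs two separate LP dualities: Theorem~\ref{thm:dualForman} identifies $\max_{K'}F'(x)$ with $\min\{\de\de^*h(x):h(x)=1,\ |h|\le 1,\ \de x\cdot\de h\le 0\}$, and Theorem~\ref{thm:OllivierVectorField} shows $\kappa(x)$ equals the same minimum; your variational formula is the starting point of that duality, but you do not carry out the dualization over the cell weights.)

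The genuine gap is the reverse inequality, and it sits exactly where you flag it: the ``dictionary'' is asserted, not established. Three concrete issues must be resolved. First, an optimal coupling $\xi$ in general moves mass involving the endpoints $v,w$ of $x$ themselves ($\xi(v,v)$, $\xi(w,w)$, $\xi(v',w)$, $\xi(v,w')$); none of this closes up to a cycle through $x$, so it cannot be encoded as a $2$-cell and must instead be absorbed into the diagonal term and into marginal deficits. Second, mass transported over distance $\ge 3$ would close up to a cycle of length $\ge 6$, which is not an admissible $2$-cell; one must check that discarding it does no harm. Third, and most delicately, your sign picture is backwards for the edges $y$ adjacent to $x$: by \eqref{eq:absH} the down-contribution $\de\de^*$ and the up-contribution $\de^*\de$ enter $Hy(x)$ with \emph{opposite} signs, so there \emph{is} cancellation inside the absolute value, and that cancellation is precisely what turns $|Hy(x)|$ into the marginal deficit $\bigl|\markov(v,v')-\sum_{w'}\rho(v',w')\bigr|$ rather than into a transport cost. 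The paper packages all three points into a separate statement (Theorem~\ref{thm:NewTransportplan}): a reformulation of $\kappa(x)$ over \emph{unconstrained} couplings $\rho$ on $N(v,w)\times N(w,v)$ in which penalty terms $|A_\rho|$, $|B_\rho|$ replace the marginal constraints. Proving that reformulation in both directions is the actual content of the hard half; until it, or the equivalent complementary-slackness argument, is written out, you have only $\max_K F_K(x)\le\kappa(x)$.
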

The theorem reappears as Corollary~\ref{cor:KequalsF}.
We give two proofs of the theorem, one via dual linear programming, one via translating transport plans to cycle weights.
We also give the dual program to maximizing over the choice of cycles the minimal Forman curvature, see Theorem~\ref{thm:maxminForman}, and the appendix for a Mathematica implementation.

The curvatures explained above are implicitly based on the combinatorial path distance. We generalize the curvature notions to make them compatible to arbitrary path distances generated by a postive function $\omega : X \to (0,\infty)$, see Section~\ref{sec:VaryDistance}. 
Particularly, we define the generalized Forman curvature
\[
F_\omega (x) :=  Hx(x) - \sum_{y \neq x} \frac{\omega(y)}{\omega(x)}|Hy(x)|.
\]
and the generalized Ollivier curvature
\[
\omega(x)\kappa_\omega(x) := \inf_{\substack{\de f(x) = \omega(x)\\|\de f| \leq \omega}} \de \de^*\de f
\]
corresponding to the path distance generated by $\omega$ on 1-cells.
We then transfer our results to the weighted case.
Finally, we recover Forman's original definition with cell weights $w$ by setting $m:=\frac 1 w$ and $\omega := \sqrt{w}$, see Section~\ref{sec:OriginalForman}.
Section~\ref{sec:VaryDistance} can be read as an extended summary of our results.

\subsection{Background}

Discrete curvature is a vibrant subject of recent research.
It plays an important role in both, pure and applied math.
In pure math, it connects differential geometry, combinatorics, global analysis and algebraic topology. 
In applied math, it is a valueable tool in network analysis \cite{sreejith2016forman,saucan2018discrete} and has been used for network alignment \cite{ni2018network}, community detection \cite{ni2019community}, and also e.g. for analyzing the topology of the internet \cite{ni2015ricci}, cancer networks \cite{sandhu2015graph}, or instabilities in financial markets \cite{samal2021network}.

The concept of discrete Ricci curvature turned out to be particularly fruitful. 
It originated from the Ricci curvature of Riemannian geometry. In fact, Ricci curvature is one of the most important concepts of Riemannian geometry (see \cite{jost2017a}). For instance, it controls the volume growth of distance balls via the Bishop-Gromov inequality. A positive lower bound of the Ricci curvature leads to an upper bound for the diameter of a complete Riemannian manifold in the Bonnet-Myers theorem. Ricci curvature  enters into the Bochner-Lichnerowicz formula from which homological restrictions or Laplacian eigenvalue bounds can be derived in the case of positive curvature. It also controls the coupling of random walks. Ricci curvature is likewise fundamental in  basic models of 20th century theoretical physics.  In the Einstein field equations of general relativity, the space-time geometry is given in terms of its Ricci curvature. Fundamenal constituents of superstring theory, the Calabi-Yau manifolds, are characterized by the vanishing of their Ricci curvature.  

Originally, Ricci curvature is defined as a contraction of the Riemann curvature tensor, and thus obtained in terms of certain combinations of second and first derivatives of the metric tensor, i.e., as an infinitesimal quantity. As indicated above, Ricci curvature bounds that hold throughout the manifold in question imply certain local or global properties of that manifold. In fact, several of these properties turned out to be equivalent to Ricci curvature bounds. Therefore, it was natural to envision more abstract definitions of Ricci curvature, or at least Ricci bounds, in terms of those properties. In particular, this led to notions of the Ricci curvature of graphs, or sometimes, more generally, simplicial or CW complexes. The most important ones seem to be 
Forman curvature \cite{forman2003bochner}, Ollivier curvarue \cite{ollivier2007ricci,ollivier2009ricci} (with important modifications or reformulations in \cite{lin2011ricci,munch2017ollivier}), Bakry Emery curvature \cite{schmuckenschlager1998curvature,lin2010ricci} and entropic Ricci curvature \cite{mielke2013geodesic,erbar2012ricci}.   While all these notions reduce to the original Ricci curvature when applied to a Riemannian manifold, for graphs they take different values and their bounds have different geometric and analytic consequences. While the latter three can be characterized by gradient estimates for the heat equation on a graph, the Forman curvature of a cell complex is based on a Weitzenb\"ock decomposition of the Hodge Laplacian as a sum of a minimally diagonally dominant operator interpreted as Bochner Laplacian, and a multiplication operator interpreted as curvature.

The question then naturally emerges how these different notions are related to each other, and in particular, how the conceptually somewhat different Forman curvature is related to the others. They cannot be identical, already for the simple reason that the Forman curvature of a graph can be computed by a very easy formula, while for instance Ollivier curvature requires the solution of a more difficult optimization problem. And the Forman curvature of a graph does not take triangles into account, while triangles strongly affect the value of the Ollivier curvature. But both Forman and Ollivier curvature are naturally assigned to the edges of a graph, and in empirical data, they show correlations \cite{samal2018comparative}. 
In this contribution, we show that (a slight reformulation of) Forman curvature becomes equal to Ollivier curvature when we also optimize it. The optimization for Ollivier requires an optimal transport plan between the neighborhoods of the vertices of an edge. The optimization for Forman that we describe here seems to be of a different nature, as we shall optimize the weights of CW complexes that have the original graph as its 1-skeleton. But in the end, the two optimizations turn out to be equivalent.

%\cite{asoodeh2018curvature}
%\cite{eidi2019ollivier}

% The concept of discrete Ricci curvature turned out to be particularly fruitful. While there is only one notion of Ricci curvature in the Riemannian case, there are at least four different discrete Ricci curvature notions in the discrete case. These are Forman curvature \cite{forman2003bochner}, Ollivier curvarue \cite{ollivier2007ricci,ollivier2009ricci}, Bakry Emery curvature \cite{schmuckenschlager1998curvature,lin2010ricci} and entropic Ricci curvature \cite{mielke2013geodesic,erbar2012ricci}.

\section{Diagonally dominant non-negative operators}\label{sec:diagDom}
We start introducing weighted Euclidean spaces.
Notation for weighted Euclidean spaces is notoriously difficult. 
It is always tempting to mix up whether the weight should be in the enumerator, the denominator, or nowhere.
Let $X$ be a finite set and $m:X\to (0,\infty)$. The function $m$ induces a scalar product on $C(X)=\R^X$ by
\[
\langle f,g \rangle := \sum_{x\in X} f(x)g(x)m(x).
\]
The corresponding Hilbert space is denoted by $\ell_2(X,m)$.
By abuse of notation, we write $x=1_x \in C(X)$ for $x \in X$.
For a linear operator $A : C(X) \to C(X)$ with adjoint $A^*$, we have 
\[
Af(x) = \sum_y Ay(x) f(y) = \sum_y \frac {f(y)} {m(x)} \langle Ay,x \rangle
\]
and
\[
Ay(x) = \frac{\langle Ay,x \rangle}{m(x)}  =  \frac{\langle y,A^*x \rangle}{m(x)} = \frac{m(y)}{m(x)} A^*x(y).
\]
A selfadjoint operator on $H$ on $\ell_2(X,m)$ is called \emph{diagonally dominant} (and non-negative) if
\[
Hx(x) \geq \sum_{y\neq x} |Hy(x)|, \hspace{1cm} \mbox{ for all } x \in X,
\]
and \emph{minimally diagonally dominant} (and non-negative) if
\[
Hx(x) = \sum_{y\neq x} |Hy(x)|, \hspace{1cm} \mbox{ for all } x \in X.
\]
For convenience, we omit mentioning non-negativity. Forman calls this property strong nonnegativity \cite[Definition~1.2]{forman2003bochner}.
There is a vast amount of literature about diagonally dominant operators, see e.g. \cite{barker1975cones}.
For defining Forman curvature, one decomposes some selfadjoint $H$ as
\[
H=\Delta + D
\]
where $\Delta$ is minimally diagonally dominant, and $D$ is diagonal and will play the role of Forman curvature.
\begin{remark}
The decomposition vaguely reminds of decomposing a Schroedingeer operator into a Laplacian part $\Delta$ and a potential part $D$.
Indeed, $\Delta$ can be interpreted as the Laplacian of a signed graph. 
Moreover, diagonally non-negative operators can be interpreted as a signed graph Laplacian plus non-negative potential.
For details about signed graph Laplacians, see e.g. \cite{zaslavsky1982signed}.
\end{remark}
We now give an explicit expression for $D$.
For a selfadjoint operator $H$ on $\ell_2(X,m)$,
we define $D:=D(H):\ell_2(X,m) \to \ell_2(X,m)$ as a diagonal operator given by $Dx(y):=0$ if $x\neq y$ and
\begin{align}\label{eq:diagPlusSignedLaplace}
Dx(x) := Hx(x) - \sum_{y\neq x} Hy(x), \hspace{1cm}\mbox{ for all } x \in X.
\end{align}
We notice that $D$ is the unique diagonal operator such that $H-D$ is minimally diagonally dominant.
Applying $D$ to the later introduced Hodge Laplacian will give the Forman curvature.

\subsection{Semigroup estimate and maximum principle}
We now characterize diagonal dominance of $H$ by contractivity of the semigroup $e^{-tH}$, and by a maximum principle. To do so, we need the norms
\[
\|f\|_\pnorm^\pnorm:= \sum_{x\in X} m(x)|f^\pnorm|
\]
for $\pnorm \in [1,\infty)$ and $\|f\|_\infty := \max_{x\in X} |f(x)|$ for $f \in C(X)$. For an operator $H:C(X) \to C(X)$, we define $\|H\|_{\pnorm\to \pnorm}:= \sup_{\|f\|_\pnorm=1}\|Hf\|_\pnorm$.
%We remark that we denote the norms as $\pnorm$-norms as the variable $\pcell$ is used for $\pcell$-cells.
\begin{theorem}\label{thm:semigroupPUnweighted}
Let $H$ be a selfadjoint operator on $\ell_2(X,m)$. 
The following statements are equivalent.
\begin{enumerate}[(i)]
\item $H$ is diagonally dominant,
\item $\|e^{-tH}\|_{\infty \to \infty} \leq 1$,
\item $\|e^{-tH}\|_{\pnorm \to \pnorm} \leq 1$ for all $\pnorm \in [1,\infty]$,
\item $Hf(x) \geq 0$ whenever $f(x)=\|f\|_\infty$.
\end{enumerate}
\end{theorem}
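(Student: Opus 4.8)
The plan is to prove the four statements equivalent via the cycle (i) $\Rightarrow$ (iv) $\Rightarrow$ (ii) $\Rightarrow$ (iii) $\Rightarrow$ (i), so that each implication isolates one idea. The implication (i) $\Rightarrow$ (iv) I would dispatch directly: if $f(x)=\|f\|_\infty$ then $f(x)\ge 0$ and $|f(y)|\le f(x)$ for every $y$, so separating the diagonal term gives
\[
Hf(x)=Hx(x)f(x)+\sum_{y\ne x}Hy(x)f(y)\ge f(x)\Big(Hx(x)-\sum_{y\ne x}|Hy(x)|\Big)\ge 0,
\]
the last inequality being exactly diagonal dominance together with $f(x)\ge 0$.

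The heart of the theorem is (iv) $\Rightarrow$ (ii), and this is the step I expect to be the main obstacle. Rather than differentiating the non-smooth trajectory $t\mapsto\|e^{-tH}f\|_\infty$ and fighting with Dini derivatives, I would pass through the resolvent. First I would check that $I+sH$ is invertible for $s>0$: if $(I+sH)g=0$ then $g=-sHg$, and the maximum principle (iv), applied at a point where $|g|$ attains its maximum (after replacing $g$ by $-g$ if that value is negative), forces $\|g\|_\infty\le 0$, so $g=0$. The same computation with a general right-hand side yields contractivity of the resolvent: writing $g=(I+sH)^{-1}f$ and choosing $x$ with $g(x)=\pm\|g\|_\infty$, statement (iv) applied to $\pm g$ gives $\pm Hg(x)\ge 0$, whence $\pm f(x)=\pm g(x)+s(\pm Hg(x))\ge\|g\|_\infty$ and therefore $\|(I+sH)^{-1}f\|_\infty\le\|f\|_\infty$. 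Finally I would invoke the exponential formula $e^{-tH}=\lim_{n\to\infty}\big(I+\tfrac t n H\big)^{-n}$, routine in finite dimensions, to transport the contractivity of each resolvent factor to the semigroup. The delicate points here are the sign bookkeeping in the maximum principle (both signs must be treated, which (iv) permits via $f\mapsto -f$) and justifying the limit formula, but neither is deep once the resolvent estimate is in place.

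For (ii) $\Rightarrow$ (iii) I would exploit self-adjointness. With respect to the $m$-weighted pairing $\langle f,g\rangle=\sum_x f(x)g(x)m(x)$, the dual of $\|\cdot\|_\pnorm$ is $\|\cdot\|_q$ with $1/\pnorm+1/q=1$, and the Banach-space adjoint in this pairing coincides with the $\ell_2(X,m)$-adjoint. Since $e^{-tH}$ is self-adjoint, duality gives $\|e^{-tH}\|_{1\to1}=\|e^{-tH}\|_{\infty\to\infty}\le 1$, and Riesz--Thorin interpolation between $\pnorm=1$ and $\pnorm=\infty$ on the finite measure space $(X,m)$ yields $\|e^{-tH}\|_{\pnorm\to\pnorm}\le1$ for every $\pnorm\in[1,\infty]$.

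Finally (iii) $\Rightarrow$ (i) reduces to its special case $\pnorm=\infty$, i.e. to (ii) $\Rightarrow$ (i), which I would settle by linearization. Fixing $x_0$ and setting $f(x_0)=1$ and $f(y)=-\sgn(Hy(x_0))$ for $y\ne x_0$, one has $\|f\|_\infty=1$ and $Hf(x_0)=Hx_0(x_0)-\sum_{y\ne x_0}|Hy(x_0)|$. Expanding $e^{-tH}f(x_0)=1-t\,Hf(x_0)+o(t)$ and using $e^{-tH}f(x_0)\le\|e^{-tH}f\|_\infty\le 1$, then dividing by $t>0$ and letting $t\to0^+$, gives $Hf(x_0)\ge 0$, which is precisely diagonal dominance at $x_0$. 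As $x_0$ is arbitrary, this closes the cycle.
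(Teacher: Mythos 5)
Your proof is correct, but it routes the hard implication differently from the paper. The paper proves $(i)\Rightarrow(iii)$ in one stroke for all $\pnorm$ by differentiating $t\mapsto\|e^{-tH}f\|_\pnorm^\pnorm$: with $g=e^{-tH}f$ and $\phi(s)=|s|^{\pnorm-1}\sgn(s)$ it writes $\langle Hg,\phi(g)\rangle$ as the diagonal contribution $\langle Dg,\phi(g)\rangle\ge 0$ plus the symmetrized off-diagonal sum $\tfrac12\sum_{x\neq y}|\langle Hy,x\rangle|\bigl(g(x)+\sigma_{xy}g(y)\bigr)\bigl(\phi(g(x))+\sigma_{xy}\phi(g(y))\bigr)\ge 0$, the sign coming from $\phi$ being odd and increasing; the implications $(ii)\Rightarrow(i)$ and $(ii)\Leftrightarrow(iv)$ are then obtained exactly as in your last step, by linearizing at $t=0$ against the test function $f=1_x-\sum_{y\neq x}\sgn(Hy(x))1_y$. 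You instead go $(i)\Rightarrow(iv)\Rightarrow(ii)$ through the resolvent $(I+sH)^{-1}$, deriving its $\ell_\infty$-contractivity from the maximum principle and passing to the semigroup by the exponential formula, and you then reach intermediate $\pnorm$ by self-adjoint duality ($\|e^{-tH}\|_{1\to1}=\|e^{-tH}\|_{\infty\to\infty}$ under the $m$-weighted pairing) plus Riesz--Thorin. Both arguments are sound. Yours is the standard sub-Markovian machinery: modular, it yields resolvent contractivity as a by-product and isolates the role of the maximum principle; its one external ingredient is interpolation, where you should complexify before invoking Riesz--Thorin (harmless, since the real $\pnorm\to\pnorm$ norm is dominated by the complex one and the endpoint norms agree over $\R$ and $\C$). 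The paper's route is entirely self-contained and handles all $\pnorm$ simultaneously, at the price of the one algebraic symmetrization trick.
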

\begin{proof}
The implication $(iii) \Rightarrow (ii)$ is clear. We next prove $(ii)\Rightarrow (i)$.
Let $x \in X$ and $f(x):=1$ and $f(y):= -\sgn(Hy(x))$ for $y \neq x$.
Then, 
\[
-\partial_t e^{-tH}f(x)|_{t=0} = Hf(x) = Hx(x) - \sum_{y \neq x} \sgn (Hy(x)) Hy(x) = Hx(x) - \sum_{y \neq x } |Hy(x)|
\]
By $(ii)$, we have $\partial_t e^{-tH}f(x)|_{t=0} \leq 0$ implying diagonal dominance as $x \in X$ is arbitrary.
We finally prove $(i) \Rightarrow (iii)$. As the norms are continuous in $\pnorm$, we can assume $\pnorm \in (0,\infty)$.
Let $f \in C(X)$. Then with $g:= e^{-tH} f$ and $\phi(s):=|s|^{\pnorm-1}\sgn(s)$, and $\sigma_{xy} := \sgn Hy(x)$, 
\begin{align*}
-\partial_t \|e^{-tH}f\|_\pnorm^\pnorm &= \langle Hg, \phi(g) \rangle 
\\&= \langle Dg,\phi(g) \rangle + \sum_{x \neq y}\langle Hx,y \rangle g(x)\phi(g(y)) + \sum_x g(x)\phi(g(x))m(x) \sum_{y\neq x}|Hy(x)|
\\
&= \langle Dg,\phi(g) \rangle + \frac 1 2 \sum_{x \neq y} |\langle Hy,x\rangle| \Big(g(x)+\sigma_{xy}g(y) \Big) \Big(\phi(g(x)) + \sigma_{xy} \phi(g(y)) \Big) \\
&\geq 0
\end{align*}
where the estimate follows as $D \geq 0$ by diagonal dominance and as $\phi$ is odd and increasing.
Integrating over $t$ gives $\|e^{-tH}f\|_\pnorm \leq \|f\|_\pnorm$. This implies $(iii)$. 
Finally, the equivalence of $(ii)$ and $(iv)$ follows by taking derivative at $t=0$ in $(ii)$.
\end{proof}

\begin{remark}
For each diagonally dominant operator $H$, one can construct a covering graph Laplacian $H' : C(X\times \{1,-1\}) \to C(X\times \{1,-1\})$ with non-negative potential as follows.
Let $\sigma_{xy}:=-\sgn(Hy(x))$ and
\[
H' f(x,k) := Dx(x)f(x,k) + \frac {1}{m(x)}\sum_{y} |\langle Hy,x\rangle|(f(x,k) - f(y,k\sigma_{xy})). 
\]
If $H$ is minimally diagonally dominant, then $D=0$, i.e., the potential vanishes.
With \[C'(X) := \{f \in C(X\times \{1,-1\}): f(x,k)=-f(x,-k) \mbox{ for all } x\in X \mbox{ and } k \in \{-1,1\}\}\]
\end{remark}
and the isomorphism $\Phi:C'(X) \to C(X), f \mapsto f(\cdot,1)$, we get
\[
H'|_{C'(X)} = \Phi^{-1} H \Phi.
\]
Hence, semigroup contractivity and maximum principle of $H$ can be deduced by the corresponding properties of $\Delta$ which follow from standard theory on Dirichlet forms or Markov semigroups, see e.g. \cite[Lemma~3.10]{eberle2009markov}.

\subsection{Weighted diagonal dominance}

Let $H$ be selfadjoint on $\ell_2(X,m)$. Let $\omega:X \to (0,\infty)$. We say $H$ is \emph{diagonally dominant with respect to} $\omega$ (and non-negative) if
\[
\omega(x) Hx(x) \geq \sum_{y \neq x} \omega(y)Hy(x).
\]
Operators $H$ being diagonally dominant with respect to some positive $\omega$ are called $H$-matrices in the literature \cite{plemmons1977m}.
For giving a semigroup characterization, we have to modify the norms.
We define 
\[
\|f\|_{\omega,\pnorm} := \| \omega^{\frac 2 \pnorm - 1} \cdot f\|_\pnorm
\]
for $\pnorm \in [1,\infty)$ and $\|f\|_{\infty,\omega} := \|f/\omega\|_\infty$.

\begin{theorem}\label{thm:weightedDiagonalDominance}
Let $H$ be a selfadjoint operator on $\ell_2(X,m)$ and $\omega:X \to (0,\infty)$. 
The following statements are equivalent.
\begin{enumerate}[(i)]
\item $H$ is diagonally dominant with respect to $\omega$.
\item $\left\|e^{-tH}f\right\|_{\omega,\infty} \leq \left\|f \right\|_{\omega,\infty}$ for all $f \in C(X)$,
\item $\left\|e^{-tH}f\right\|_{\omega,\pnorm} \leq \left\|f \right\|_{\omega,\pnorm}$ for all $f \in C(X)$ and $\pnorm \in [1,\infty]$,
\item $Hf(x) \geq 0$ whenever $ (f/\omega)(x) = \|f\|_{\omega,\infty}$.
\end{enumerate}
\end{theorem}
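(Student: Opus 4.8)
The plan is to reduce everything to the unweighted Theorem~\ref{thm:semigroupPUnweighted} by conjugating $H$ with the multiplication operator $f \mapsto \omega f$ and simultaneously changing the underlying measure. Concretely, I would set $\tilde m := \omega^2 m$ and define $\tilde H : C(X) \to C(X)$ by $\tilde H f := \frac{1}{\omega} H(\omega f)$. The goal is to prove that $\tilde H$ is selfadjoint on $\ell_2(X,\tilde m)$ and that, under the correspondence $f \leftrightarrow f/\omega$, each of the statements (i)--(iv) for the pair $(H,\omega)$ is exactly the corresponding statement of Theorem~\ref{thm:semigroupPUnweighted} applied to $\tilde H$ on $\ell_2(X,\tilde m)$. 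Once this dictionary is established, the theorem follows immediately by invoking the unweighted result.

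First I would record the matrix entries of $\tilde H$. A direct computation gives $\tilde H y(x) = \frac{\omega(y)}{\omega(x)} Hy(x)$, so in particular the diagonal is unchanged, $\tilde H x(x) = Hx(x)$. Selfadjointness of $H$ on $\ell_2(X,m)$ means that $m(x)Hy(x)$ is symmetric in $x$ and $y$, by the identity $Ay(x)=\frac{m(y)}{m(x)}A^*x(y)$ from the start of this section. Writing $\tilde m(x)\tilde H y(x)=\frac{\tilde m(x)}{\omega(x) m(x)}\,\omega(y)\,m(x)Hy(x)$, one sees that the choice $\tilde m=\omega^2 m$ makes the prefactor $\frac{\tilde m(x)\omega(y)}{\omega(x)m(x)}=\omega(x)\omega(y)$ symmetric, hence $\tilde m(x)\tilde H y(x)$ is symmetric and $\tilde H$ is selfadjoint on $\ell_2(X,\tilde m)$. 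With the entries above, ordinary diagonal dominance of $\tilde H$, namely $\tilde H x(x)\ge \sum_{y\neq x}|\tilde H y(x)|$, reads $\omega(x)Hx(x)\ge \sum_{y\neq x}\omega(y)|Hy(x)|$, which is precisely diagonal dominance of $H$ with respect to $\omega$. This gives the equivalence of the two versions of (i).

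Next I would translate the norms. A computation of exponents shows, for every $\pnorm\in[1,\infty]$, that $\|f\|_{\omega,\pnorm}$ equals the ordinary $\pnorm$-norm of $f/\omega$ taken with respect to $\tilde m$: for $\pnorm<\infty$ both sides equal $\left(\sum_x m(x)\omega(x)^{2-\pnorm}|f(x)|^\pnorm\right)^{1/\pnorm}$, and for $\pnorm=\infty$ both equal $\|f/\omega\|_\infty$. Since $e^{-t\tilde H}f=\frac{1}{\omega}e^{-tH}(\omega f)$, we have $e^{-t\tilde H}(f/\omega)=(e^{-tH}f)/\omega$, so the contraction statements (ii) and (iii) for $(H,\omega)$ are, after the substitution $g=f/\omega$, exactly the $\tilde m$-contractivity of $e^{-t\tilde H}$ in the ordinary $\infty$- and $\pnorm$-norms. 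Finally, for (iv) the same substitution gives $\tilde H(f/\omega)(x)=Hf(x)/\omega(x)$, which has the same sign as $Hf(x)$, while the condition $(f/\omega)(x)=\|f\|_{\omega,\infty}$ becomes $g(x)=\|g\|_\infty$; thus (iv) for $(H,\omega)$ is statement (iv) of Theorem~\ref{thm:semigroupPUnweighted} for $\tilde H$. Applying the unweighted theorem to $\tilde H$ on $\ell_2(X,\tilde m)$ then yields all four equivalences.

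The main obstacle is choosing the single conjugation and measure that make two requirements compatible at once: selfadjointness forces $\tilde m=\omega^2 m$, and one must check that this very same measure turns the tailored norms $\|\cdot\|_{\omega,\pnorm}$ into honest $\tilde m$-$\pnorm$-norms of $f/\omega$. The exponent bookkeeping (the role of the weight $\omega^{2/\pnorm-1}$) together with the verification that the $\pnorm=\infty$ endpoint also fits are the only delicate points; everything else is a transcription of the unweighted theorem.
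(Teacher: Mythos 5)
Your proposal is correct and follows essentially the same route as the paper: the paper also conjugates to $H' = \frac{1}{\omega} H \omega$, observes selfadjointness on $\ell_2(X,\omega^2 m)$, translates the $\|\cdot\|_{\omega,\pnorm}$ norms under the substitution $g=\omega f$, and invokes Theorem~\ref{thm:semigroupPUnweighted}. The only cosmetic difference is that you map statement (iv) directly to its unweighted counterpart, whereas the paper derives (ii)$\Leftrightarrow$(iv) by differentiating at $t=0$; both are fine.
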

\begin{proof}
We define $H':=\frac 1 \omega H \omega$ where $\omega$ is interpreted as a multiplication operator.
A short computation shows that $H'$ is selfaddjoint on $\ell_2(X,m\omega^2)$. We notice that $(i)$ is equivalent to diagonal dominance of $H'$. By Theorem~\ref{thm:semigroupPUnweighted}, this is equivalent to

\[
\langle \omega^2, |e^{-tH'} f|^\pnorm \rangle \leq  \langle \omega^2, |f|^\pnorm \rangle \mbox{ for all } f \in C(X), \pnorm \in [1,\infty)
\]
We have $e^{-tH'} = \frac 1 \omega e^{-tH} \omega$ and with $g:= \omega f$, we have
\[
\langle \omega^2, |e^{-tH'} f|^\pnorm \rangle = \langle \omega^{2-\pnorm}, |e^{-tH}g|^\pnorm\rangle  = \left\| e^{-tH}g\right\|_{\omega,\pnorm}^\pnorm
\]
and
\[
\langle \omega^2, |f|^\pnorm \rangle = \langle \omega^{2-\pnorm},|g|^\pnorm \rangle = 
\left\| g\right\|_{\omega,\pnorm}^\pnorm
\]
Putting together shows $(i) \Leftrightarrow (iii)$.
Using continuity of $\|f\|_{\omega,\pnorm}$ at $\pnorm=\infty$ and applying Theorem~\ref{thm:semigroupPUnweighted} again shows $(i) \Leftrightarrow (ii)$. 
The equivalence $(ii)\Leftrightarrow (iv)$ follows by taking derivative of $(ii)$ at $t=0$, similar to Theorem~\ref{thm:semigroupPUnweighted}.
This finishes the proof.
\end{proof}

\section{Cell complexes}
In this article, we will use a combinatorial notion of cell complex.
We aim to stay as general as possible while being compatible with weighted graphs and 2-dimensional regular CW complexes. We now give our definition of cell complex which might differ from the definitions in the literature. 
\begin{definition}
A cell complex  $K=(X,\de,m)$ consists of a finite set of cells $X=\dot \bigcup_{\pcell \geq 0} X_\pcell$, a linear operator $\de:C(X) \to C(X)$ called coboundary operator, and a positive weight function $m:X \to (0,\infty)$. For $x,z \in X$, we define
\begin{itemize}%[(a)]
\item $\dim(z) := k$ iff $z \in X_k$,
\item $\dim(K) := \max_{v\in X} \dim(v)$,
\item $z \more x$ iff $\de x(z) \neq 0$.
\end{itemize}
We require a cell complex to carry a hypergraph structure between $X_k$ and $X_{k+1}$. Specifically, we require that $\de$ restricted to $X_k$ is the incidence operator of an oriented hypergraph, i.e., 
for all $v,z \in X$ and all $k \in \N_0$,
\begin{itemize}%[(A)]
\item $\delta v(z) \in \{-1,0,1\}$, 
\item $\delta: C(X_k) \to C(X_{k+1})$, i.e., $\delta v(z)=0$ if $\dim(z) - \dim(v) \neq 1$,
\item If $\dim(z) \geq 1$, then there exists $x \less z$.
\end{itemize}
Moreover, we require a cell complex to satisfy the following compatibility conditions. For all $v,z,z' \in X$,
\begin{enumerate}[(i)]
\item 
$|\{w \in X_0 :\de w(x) = 1\}| = |\{w \in X_0 :\de w (x) = -1\}|=1$ for all $x \in X_1$,
\item If $\dim(z) - \dim(v)=2$ and $\{x: v\less x \less z\} \neq \emptyset$, then,
  \begin{enumerate}
   \item  $|\{x: v \less x \less z\}|=2$,
   \item $\{\de v(x) \de x(z):  x \in X\} \supseteq \{-1,1\}$,
  \end{enumerate}
\item For all $x,y \less z$, there is a sequence $(x=x_0,\ldots,x_n=y)$ with $\de v_k(v_{k-1}) + \de v_{k-1}(v_{k}) \neq 0$ for all $k=1,\ldots, n$,
\item   If $\{x:x\less z\} = \{x:x \less z'\}$ and $\dim(z) \geq 1$, then $z=z'$.
\end{enumerate}

\end{definition}

Condition (i) guarantees that $X_1$ behaves edge like, i.e., every $x \in X_1$ is incident to exactly two vertices $v \in X_0$. Without (i), every hypergraph could be seen as a cell complex.
Condition (ii)(a) is also known as diamond property, and (ii)(b)  guarantees that $\de^2 =0$.
Condition (iii) is a connectedness condition ensuring that every 2-dimensional cell complex is a regular CW complex.
Condition (iv) ensures that every 1-dimensional cell complex is a simple graph, and more generally, that cells do not appear twice.

We call $x\in X_\pcell$ a $\pcell$-cell. We call $Y:=\bigcup_{\kcell\leq \pcell} X_\kcell$, together with the corresponding restrictions of the coboundary $\de$ and the weight $m$, the $\pcell$-skeleton of $(X,\de,m)$.

For $x,y \in X$, we write $x\sim y$ if there exists $z\more x,y$ or $v\less x,y$. This induces a path distance on $X$ via
\[
d(x,y) = \inf\{n:x=x_0 \sim \ldots \sim x_n=y\}.
\]
We notice that $d(x,y)$ can be infinite. We will only need the distance on the vertex set $X_0$.

We observe
\[
\de \de v(z) = \sum_{v\less x \less z} \de v(x)\de x(z) = 0
\]
by condition (ii).
We equip $X$ with the scalar product
\[
\langle f,g \rangle := \sum_{x \in X} f(x)g(x)m(x) \quad \mbox{for }f,g \in C(X).
\]
By this, we get the adjoint $\de^*:C(X) \to C(X)$,
\[
\de^*z(x) = \frac{1}{m(x)} \langle \de^*z,x \rangle = \frac{1}{m(x)}\langle \de x,z \rangle = \frac{m(z)}{m(x)}\de x(z) \quad \mbox{for }x,z \in X
\]
and by linear extension,
\[
\de^*f(x) = \sum_z f(z)\de^*z(x) = \sum_{z\more x} f(z)\frac{m(z)}{m(x)}\de x(z).
\]
Restricting to $C(X_{\pcell+1})$ gives $\de^*:C(X_{\pcell+1}) \to C(X_\pcell)$.

We remark that our definition slightly differs from Forman's definition as he defines $\partial: C(X_k) \to C(X_{k-1})$. Particularly with Forman's notation, one has
$\partial z(x) = \de x(z) =   \frac{m(z)}{m(x)}\de^*z(x)$.
Our definition has the advantage, that it is compatible with the standard notation for weighted graphs, as shown later.

\subsection{Hodge Laplacian}

We define the Hodge Laplacian $H:C(X) \to C(X)$ given by
\[
H=\de \de^* + \de^* \de.
\]
We write $H_\pcell := H|_{C(X_\pcell)} : C(X_\pcell) \to C(X_\pcell)$.

We now calculate $Hy(x)$.
We have
\[
\de \de^*y(x) = \frac{1}{m(x)}\langle \de^*y,\de^*x \rangle = \frac{1}{m(x)} \sum_{v\less x,y} m(v)   \frac{m(y)}{m(v)}\de v(y)  \frac{m(x)}{m(v)}\de v(x)  = \sum_{v \less x,y}\frac{m(y)}{m(v)}\de v(x)\de v(y)
\]
and
\[
\de^*\de y(x) = \frac{1}{m(x)}\langle \de y,\de x \rangle = \frac{1}{m(x)} \sum_{z\more x,y} m(z)   \de y(z)  \de x (z) = \sum_{z \more x,y}\frac{m(z)}{m(x)}\de x(z) \de y(z).
\]
Hence
\[
Hy(x) =  \sum_{v \less x,y}\frac{m(y)}{m(v)}\de v (x)\de v(y)  +   \sum_{z \more x,y}\frac{m(z)}{m(x)}\de x(z) \de y(z).
\]
We remark that, in contrast to Forman's definition of the Hodge Laplacian, the weight of the smaller cell is always in the denominator.

If $x,y \in X_1$, then the first sum consists of at most one element, and by condition (ii), we have $\de v(x)\de v(y) = -\de x(z) \de y(z)$ for all $v \less x,y$ and $z \more x,y$. Hence for $x,y \in X_1$,
\begin{align}\label{eq:absH}
|Hy(x)| = \left| \sum_{v \less x,y}\frac{m(y)}{m(v)}  -   \sum_{z \more x,y}\frac{m(z)}{m(x)} \right|.
\end{align}

Moreover,
\begin{align}\label{eq:Hdiag}
Hx(x) = \sum_{v\less x} \frac{m(x)}{m(v)} + \sum_{z \more x} \frac{m(z)}{m(x)}.
\end{align}

\subsection{Weighted graphs}
We now show how to interpret 1-dimensional cell complexes as weighted graphs.
We particularly demonstrate how $H$ on $C(X_0)$ is the usual weighted graph Laplacian.

For every $x \in X_1$, there is a unique pair $(v,w)$ with $\de v(x)=-1$ and $\de w(x)=1$. By abuse of notation, we write $x=(v,w)=(w,v)$.
We then also write $m(w,v) := m(v,w) := m(x)$, and we write $m(v,w):= 0$ if there is no $x \more v,w$. We also write $m(v,v)=0$ for $v \in X_0$. 
We notice that  for $v,w \in X_0$ and $x \more v,w$,
\[
\de v(x)\de w(x) = \begin{cases} 1&:v=w, \\
-1&:v\neq w.
\end{cases}
\]

For $f \in C(X_0)$, we have $\de^*f = 0$ and thus for $v \in X_0$,
\begin{align*}
Hf(v) = \de^* \de f(v) &= \sum_w \sum_{x \more v,w} \frac{m(x)}{m(v)}\de v (x) \de w(x) f(w) \\
&= \sum_{x \more v} \frac{m(x)}{m(v)} \sum_{w \less x} \de v (x)\de w(x)f(w) \\
&=\sum_{w \in X_0} \frac{m(v,w)}{m(v)}(f(v)-f(w)).
\end{align*}

This is clearly the positive weighted graph Laplacian.
Particularly, the above procedure gives a one-to-one correspondence between weighted graphs and 1-dimensional cell complexes.
Therefore, we will usually write $G$ instead of $K$ for 1-dimensional cell complexes.

\subsection{Extending the 1-skeleton}

Let $(X,\de,m)$ be a 1-dimensional cell complex.
We recall, for $v,w \in X_0$, we have $v \sim w$ iff there exists $x \more v,w$.
A cycle is an injective path $(v_0 \sim \ldots \sim v_{n-1})$ of vertices $v_i \in X_0$ with $v_0 \sim v_{n-1}$ with $n \geq 3$.

Two cycles $(v_i)_{i=0}^{n-1}$ and $(w_i)_{i=0}^{n-1}$ are identified if $w_i = v_{k \pm i \mod n}$ for some $k \in \N$ and some fixed choice of plus or minus. 

Let $Y_2$ be the set of cycles and let $m:Y_2 \to [0,\infty)$. We write $X_2 := \{x \in Y_2: m(x)>0\}$.

For $x=(v,w) \in X_1$ and $z=(v_0 \sim \ldots \sim v_{n-1})\in X_2$, we set
\[
\de x(z) := \begin{cases}1&: v=v_k,w=v_{k+1\mod n} \mbox{ for some } k \in \N\\
-1&:v=v_k, w=v_{k-1 \mod n} \mbox{ for some } k \in \N\\
0&:\mbox{ else}.
\end{cases}
\]

Then, $(X \cup X_2,\de,m)$ is a cell complex, and also a regular weighted CW complex.
Conversely if $X$ is a cell complex of dimension at least two, then its 2-skeleton can be described as above.

%\chapter{Curvature}

\section{Forman curvature}

Forman's idea was to decompose the Hodge Laplace $H$ as a sum of a minimally diagonally dominant operator serving as Bochner Laplacian, and a diagonal operator, now known as Forman curvature \cite{forman2003bochner}.
We proceed executing Forman's ideas in a somewhat different way.
According to \eqref{eq:diagPlusSignedLaplace}, the diagonal operator  is given by
\begin{align*}
F(x):=D(H)x(x) &= Hx(x) - \sum_{y\neq x} |Hy(x)| \\
&= \sum_{v \less x}\frac{m(x)}{m(v)} + \sum_{z \more x} \frac{m(z)}{m(x)} - \sum_{y\neq x} \left|   \sum_{v \less x,y} \de v(x)\de v(y) \frac{m(y)}{m(v)} + \sum_{z\more x,y} \de x(z) \de y(z) \frac{m(z)}{m(x)}  \right|.
\end{align*}

If $x \in X_1$, the expression can be simplified via \eqref{eq:absH} and \eqref{eq:Hdiag} to
\[
F(x)= \sum_{v \less x}\frac{m(x)}{m(v)} + \sum_{z \more x} \frac{m(z)}{m(x)} - \sum_{y\neq x} \left|   \sum_{v\less x,y} \frac{m(y)}{m(v)} - \sum_{z \more x,y}  \frac{m(z)}{m(x)}  \right|.
\]

This is our new notion of a Forman type curvature with weights consistent with weighted graph Laplacians. For convenience, we simply call it Forman curvature although it slightly differs from Forman's original definition, see Section~\ref{sec:OriginalForman}.

We remark that Forman used the same approach of splitting off a minimally diagonally dominant operator to define curvature, however he used an alleged symmetrization of $H$ before the splitting off. This symmetrization made $H$ a symmetric matrix, i.e., self-adjoint with respect to the constant weight.
From this, he split off a  minimally diagonally dominant operator with respect to the constant weight. In contrast, our splitting off works with the given weight $m$. Consequently, our modified Forman curvature coincides with the original one in the unweighted case $m\equiv 1$, but not in the general case. In Section~\ref{sec:OriginalForman}, it is discussed how Forman's definition can be interpreted as a special case of our definition.

\subsection{Characterizing Forman curvature via the Hodge semigroup}

We now characterize lower Forman curvature bounds via contractivity of the Hodge semigroup $e^{-Ht}$. 
%We recall that for a linear operator $H:C(X) \to C(X)$ and a number $R\in\R$, we write $H+R:C(X) \to C(X)$, $f \mapsto Hf +  R\cdot f $. 

\begin{theorem}\label{thm:HodgeSemigroup}
Let $(X,\de,m)$ be a cell complex, let $\pcell \in \N$ and $\Ri \in \R$.
The following statements are equivalent:

\begin{enumerate}[(i)]
\item $F(x) \geq \Ri$ for all $x \in X_\pcell$,
\item $\|e^{-tH} f\|_\infty \leq e^{-\Ri t} \|f\|_\infty$ for all $f \in C(X_\pcell)$,
\item $\|e^{-tH} f\|_\pnorm \leq e^{-\Ri t} \|f\|_\pnorm$ for all $f \in C(X_\pcell)$ and all $p \in [1,\infty]$,
\item $H_\pcell-R$ is diagonally dominant and non-negative.
\end{enumerate}
\end{theorem}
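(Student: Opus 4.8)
The plan is to reduce everything to Theorem~\ref{thm:semigroupPUnweighted} applied to the shifted restriction $H_\pcell - \Ri$. First I would record that $C(X_\pcell)$ is invariant under $H$: since $\de$ maps $C(X_\pcell)$ into $C(X_{\pcell+1})$ and $\de^*$ maps $C(X_{\pcell+1})$ into $C(X_\pcell)$ (and symmetrically one dimension down), the operator $H = \de\de^* + \de^*\de$ sends $C(X_\pcell)$ into itself. Because the basis vectors $\{1_x : x \in X_\pcell\}$ are orthogonal to $\{1_y : y \notin X_\pcell\}$ in $\ell_2(X,m)$, the restriction $H_\pcell := H|_{C(X_\pcell)}$ is a selfadjoint operator on $\ell_2(X_\pcell,m)$, and hence so is $H_\pcell - \Ri$, where $\Ri$ is read as the scalar multiplication operator. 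For $f \in C(X_\pcell)$ one then has $e^{-tH}f = e^{-tH_\pcell}f \in C(X_\pcell)$, so the norms appearing in (ii) and (iii) may equally well be computed in $\ell_2(X_\pcell,m)$.

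Next I would establish $(i) \Leftrightarrow (iv)$ by reading off the matrix entries of $H_\pcell - \Ri$. For $x,y \in X_\pcell$ the diagonal entry is $(H_\pcell - \Ri)x(x) = Hx(x) - \Ri$, while the off-diagonal entries ($y \neq x$) are $(H_\pcell - \Ri)y(x) = Hy(x)$. Hence diagonal dominance of $H_\pcell - \Ri$, namely $(H_\pcell - \Ri)x(x) \geq \sum_{y \neq x}|(H_\pcell - \Ri)y(x)|$, is exactly $Hx(x) - \Ri \geq \sum_{y\neq x}|Hy(x)|$, which rearranges to $F(x) \geq \Ri$ for every $x \in X_\pcell$. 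The non-negativity in (iv) is not an extra hypothesis: the diagonal-dominance inequality forces $Hx(x) \geq 0$ and positive semidefiniteness by a Gershgorin-type argument, matching the packaged definition used in the excerpt.

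Finally, for $(iv) \Leftrightarrow (ii) \Leftrightarrow (iii)$ I would exploit that $\Ri$ commutes with $H_\pcell$, so $e^{-t(H_\pcell - \Ri)} = e^{\Ri t} e^{-tH_\pcell}$. Applying Theorem~\ref{thm:semigroupPUnweighted} to the selfadjoint operator $H_\pcell - \Ri$ on $\ell_2(X_\pcell,m)$ shows that its diagonal dominance is equivalent to $\|e^{-t(H_\pcell - \Ri)}f\|_\infty \leq \|f\|_\infty$, and to $\|e^{-t(H_\pcell-\Ri)}f\|_\pnorm \leq \|f\|_\pnorm$ for all $\pnorm \in [1,\infty]$ and all $f \in C(X_\pcell)$. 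Substituting $e^{-t(H_\pcell-\Ri)}f = e^{\Ri t}e^{-tH}f$ and dividing through by $e^{\Ri t}$ turns these into $\|e^{-tH}f\|_\infty \leq e^{-\Ri t}\|f\|_\infty$ and $\|e^{-tH}f\|_\pnorm \leq e^{-\Ri t}\|f\|_\pnorm$, which are precisely (ii) and (iii), closing the cycle.

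The content is essentially bookkeeping, so I do not expect a serious obstacle; the one point deserving care is the reduction itself, namely verifying that $C(X_\pcell)$ is $H$-invariant and that the restriction is genuinely selfadjoint on $\ell_2(X_\pcell,m)$, since this is exactly what licenses the clean application of Theorem~\ref{thm:semigroupPUnweighted} to the shifted operator rather than to $H$ on all of $C(X)$.
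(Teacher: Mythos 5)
Your argument is correct and is exactly the route the paper takes: its proof of Theorem~\ref{thm:HodgeSemigroup} simply invokes Theorem~\ref{thm:semigroupPUnweighted} via the observation that $F=D(H)$, i.e.\ that $H-F$ is minimally diagonally dominant, which is the shift-and-restrict reduction you spell out in detail. Your version just makes explicit the bookkeeping (invariance of $C(X_\pcell)$, the identity $e^{-t(H_\pcell-\Ri)}=e^{\Ri t}e^{-tH_\pcell}$) that the paper leaves implicit.
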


\begin{proof}
The theorem follows immediately from Theorem~\ref{thm:semigroupPUnweighted} as $F=D(H)$, meaning that $H-F$ is minimally diagonally dominant.
\end{proof}

\subsection{Maximizing Forman curvature over higher order cell weights}

In this section, we demonstrate how maximizing the Forman curvature over higher order cell weights can be interpreted as a linear program. 
We will give the dual linear program which we will use as interface to the Ollivier curvature.
For specifying over which cell complexes to maximize the Forman curvature, we introduce the relation $\leq_\pcell$ on cell complexes.

Let $K=(X,\de,m)$ and $K'=(X',\de',m')$ be cell complexes.

We write $K' \leq_\pcell K$ if
\begin{enumerate}[(i)]
\item
$X_\kcell = X'_\kcell$ for $\kcell\leq \pcell$,
\item 
$X_\kcell' \subseteq X_\kcell$ for $\kcell >\pcell$,
\item $m(x)=m'(x)$ whenever $\dim(x) \leq \pcell$, and
\item $\de'v(x)=\de v(x)$ for all $v,x \in X'$.
\end{enumerate}

Roughly speaking, $K' \leq_\pcell K$ means that their $\pcell$-skeletons coincide and that all higher order cells of $K'$ also belong to $K$, however the higher order cell weights can differ.

We now give the dual expression for the Forman curvature maximized over all $K'\leq_\pcell K$ for a fixed cell complex $K$.
\begin{theorem}\label{thm:dualForman}
Let $K=(X,\de,m)$ be a cell complex and $x \in X_\pcell$ for some $\pcell \in \N_0$.
Then,
\[
\max_{K' \leq_\pcell K}F'(x) = \min_{\substack{h(x)=1\\|h|\leq 1\\\de h \cdot \de x \leq 0}} \de \de^*h(x).
\]

\end{theorem}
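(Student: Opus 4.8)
The plan is to collapse both sides to a single finite-dimensional linear program and match them by LP duality. First I would note that, for a fixed $\pcell$-cell $x$, the value $F'(x)$ depends on $K' \leq_\pcell K$ only through the weights of the $(\pcell+1)$-cells $z \more x$: the faces $v \less x$ and the neighbouring $\pcell$-cells $y \neq x$ (together with all their weights) are frozen by the definition of $\leq_\pcell$, and cells of dimension $\geq \pcell+2$ never enter $Hx(x)$ or $Hy(x)$. Writing $a_z := m'(z)/m(x)$, deleting a cell ($a_z = 0$) or keeping it with an arbitrary positive weight shows that the admissible configurations are exactly $a\geq 0$ ranging over the $(\pcell+1)$-cells above $x$. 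Using \eqref{eq:Hdiag} and the formula for $Hy(x)$, I would record the affine identities $Hx(x) = c + \sum_{z\more x} a_z$ and $Hy(x) = b_y + \sum_{z\more x}\de x(z)\de y(z)\,a_z$ for $y \neq x$, where $c := \sum_{v\less x} m(x)/m(v)$ and $b_y := \sum_{v\less x,y}\frac{m(y)}{m(v)}\de v(x)\de v(y)$; crucially $c = \de\de^* x(x)$ and $b_y = \de\de^* y(x)$, so that $\de\de^* h(x) = c + \sum_{y\neq x} b_y\,h(y)$ for any $h$ with $h(x)=1$.

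Consequently the left-hand side equals $\max_{a\geq 0}\big[\,c + \sum_{z} a_z - \sum_{y\neq x}\lvert b_y + \sum_{z}\de x(z)\de y(z)\,a_z\rvert\,\big]$, a concave maximization which I would linearize by introducing free slacks $t_y$ subject to $t_y \geq \pm\big(b_y + \sum_z \de x(z)\de y(z)\,a_z\big)$ and maximizing $c + \sum_z a_z - \sum_{y\neq x} t_y$. This is a genuine linear program in $(a,t)$ with $a\geq 0$ and $t$ free.

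The heart of the argument is to form and read off the dual. Assigning multipliers $\lambda_y^{+},\lambda_y^{-}\geq 0$ to the two bounds on $t_y$, the free variable $t_y$ forces $\lambda_y^{+}+\lambda_y^{-}=1$, the nonnegative variable $a_z$ forces $\sum_{y\neq x}\de x(z)\de y(z)(\lambda_y^{+}-\lambda_y^{-})\geq 1$ for each $z\more x$, and the dual objective is $c + \sum_{y\neq x} b_y(\lambda_y^{-}-\lambda_y^{+})$. Setting $h(x):=1$ and $h(y):=\lambda_y^{-}-\lambda_y^{+}$, the equalities $\lambda_y^{+}+\lambda_y^{-}=1$ become exactly $\lvert h(y)\rvert\leq 1$ (each value in $[-1,1]$ arising from $\lambda_y^{\pm}=(1\mp h(y))/2$), the objective becomes $c+\sum_{y\neq x} b_y h(y)=\de\de^* h(x)$, and, since $\de h(z)=\de x(z)+\sum_{y\neq x}h(y)\de y(z)$, the dual constraint rewrites as $\de h(z)\,\de x(z)\leq 0$ for all $z\more x$. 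This is precisely the minimization on the right-hand side, so strong LP duality finishes the proof.

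I expect the genuine obstacle to be the reduction of the first paragraph rather than the (routine) dualization: one must verify that every $a\geq 0$ is realized by an \emph{honest} cell complex $K'\leq_\pcell K$ — in particular that deleting a cell to reach $a_z=0$ preserves the cell-complex axioms — and, conversely, that no further freedom in $K'$ can alter $F'(x)$. A secondary point is boundary behaviour: since weights are strictly positive, the value $a_z=0$ is attained only by deletion, so I would confirm the supremum is a true maximum and treat the unbounded case (primal value $+\infty$, empty dual polytope) via the usual $\pm\infty$ conventions of LP duality.
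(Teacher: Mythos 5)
Your proposal is correct and follows essentially the same route as the paper: both express $\max_{K'\leq_\pcell K}F'(x)$ as a linear program in the nonnegative $(\pcell+1)$-cell weights together with slack variables replacing the absolute values, dualize, and identify the dual variables with a function $h$ satisfying $h(x)=1$, $|h|\leq 1$, $\de x\cdot\de h\leq 0$ and dual objective $\de\de^*h(x)$. The only differences are notational (the paper normalizes higher cell weights to $1$ and uses nonpositive dual variables $c_\pm$ where you use $\lambda^\pm\geq 0$), and your closing remarks on realizability of $a_z=0$ by deletion and on the unbounded/infeasible case are points the paper passes over silently.
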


\begin{proof}
W.l.o.g., $m(y)=1$ whenever $\dim(y)>\dim(x)$.
Hence for $z \in X_{\pcell+1}$ and $y \in X_\pcell$,
\[
(\de')^*z(y) = \frac{1}{m(y)} \langle (\de')^*z,y \rangle = \frac{1}{m(y)} \langle z, \de y \rangle' = \frac{m(z)}{m(y)} \langle z, \de y \rangle = m'(z)\de^*z(y) = \de^* m'z(y).
\]
We write $n:=m'|_{X_{\pcell + 1}}$.
Then on $X_\pcell$,
\[
H_\pcell' = \de \de^* + \de^*n\de.
\]
Hence,
\begin{align*}
\max_{K'\leq_\pcell K} F'(x) &= \max_{K' \leq_\pcell K}  H'x(x) - \sum_{y\neq x} |H'y(x)| \\
&= \max_{\substack{n \in C(X_{\pcell + 1})\\n\geq 0}} \left( \de \de^*x(x) + \de^*n\de x(x) - \sum_{y\neq x} \left| \de \de^*y(x) + \de^*n\de y(x) \right| \right).
\end{align*}

We write this as a linear program by replacing the absolute values by a variable $g(y)$ for $y \in X_\pcell \setminus \{x\}$.
The variables are $n(z) \geq 0$ for $z \in X_{\pcell + 1}$ and $g(y) \in \R$.
The linear program is

\vspace{0.5cm}
\noindent
Maximize
\[
\de \de^*x(x) + \de^*n\de x(x) - \sum_{y\neq x} g(y)
\]
subject to
\begin{align*}
g(y) - \de^*n\de y(x) &\geq \de \de^*y(x),  &y \in X_\pcell \setminus \{x\}& \qquad (c_-(y)\leq 0)\\
g(y) + \de^*n\de y(x) &\geq -\de \de ^*y(x), &y \in X_\pcell \setminus \{x\}& \qquad (c_+(y)\leq 0)
\end{align*}

For convenience, we already wrote the dual variables in parentheses.
The dual linear program consists of variables $c_-(y),c_+(y) \leq 0$ for $y \in X_\pcell \setminus \{x\}$. It is given by 

\vspace{0.5cm}
\noindent
Minimize
\[
\de \de ^*x(x) +  \de \de^*c_-(x) - \de \de^*c_+(x)
\]
subject to
\begin{align*}
c_-(y)+c_+(y) &= -1,&y \in X_\pcell \setminus \{x\}  &\qquad (g(y) \in \R)\\
\de x(z) \de(c_+-c_-)(z) &\geq (\de x(z))^2,& z \in X_{\pcell + 1}  & \qquad (n(z) \geq 0)
\end{align*}

Again, we wrote the primal variables corresponding to the dual conditions in parentheses.
We define $h \in C(X_\pcell)$ via $h=c_--c_+ + 1_x$.
Then, $|h(y)| \leq 1$ is an equivalent condition to $c_-(y)+c_+(y)=-1$ and $c_-,c_+ \leq 0$.

Moreover, the term to minimize becomes $\de \de^*h(x)$, and the second condition becomes $\de x(z)\de h(z) \leq 0$.
As the primal and the dual linear program have the same optimal value, we get
\[
\max_{K' \leq_\pcell K} F'(x) = \min_{\substack{h(x)=1 \\ |h|<1 \\ \de x  \cdot \de h\leq 0}}\de \de^*h(x).
\]
This finishes the proof.
\end{proof}

We now characterize the optimal lower bound for the Forman curvature when maximizing over the cell weights.
For an operator $\opt: C(X_\pcell) \to C(X_\pcell)$, we write $\Tr \opt := \sum_{x \in X}\opt x(x)$.
A Mathematica implementation of the following theorem can be found in the appendix.
\begin{theorem}\label{thm:maxminForman}
Let $K=(X,\delta,m)$ be a cell complex and $k \in \N_0$. Then,
\[
\max_{K' \leq_k K} \min_{x \in X_k}F'(x) = \min_\opt \Tr(\de \de^* \opt)
\]
where the minimum is taken over all linear $\opt \in C(X_k) \to C(X_k)$ satisfying
\begin{enumerate}[(a)]
\item $\frac{\opt x(x)}{m(x)} \geq \frac{|\opt y(x)|}{m(y)}$ for all $x,y \in X_k$,
\item $\de \opt \de^* z(z) \leq 0$ for all $ z \in X_{k+1}$,
\item $\Tr(\opt)=1$.
\end{enumerate}
Moreover, condition ${(a)}$ is equivalent to
\begin{enumerate}[(a')]
\item  $\opt f(x) \geq 0$ whenever $2\langle x,f\rangle \geq \|f\|_1$.
\end{enumerate}

\end{theorem}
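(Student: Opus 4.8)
The plan is to read the quantity $\max_{K'\le_k K}\min_{x\in X_k}F'(x)$ as the value of a linear program and to identify its dual with the minimization over $\opt$. As in the proof of Theorem~\ref{thm:dualForman} I normalize $m\equiv 1$ on cells of dimension $>k$ and write $n:=m'|_{X_{k+1}}\ge 0$, so that $H_k'=\de\de^*+\de^* n\de$ and $F'(x)=\de\de^* x(x)+\de^* n\de x(x)-\sum_{y\ne x}|\de\de^* y(x)+\de^* n\de y(x)|$ is a concave function of $n$. Introducing a scalar $R$ and, for each $x$ and $y\ne x$, an auxiliary variable $g(x,y)$ bounding the absolute value from above, the left-hand side is the value of the primal program: maximize $R$ over $R\in\R$, $n\ge 0$, $g$ free, subject to $\de\de^* x(x)+\de^* n\de x(x)-\sum_{y\ne x}g(x,y)\ge R$ for each $x$, and $g(x,y)\ge\pm(\de\de^* y(x)+\de^* n\de y(x))$ for each $x,y\ne x$. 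This program is feasible and, by Theorem~\ref{thm:dualForman}, bounded above (for a fixed $x_0$ and any $n$ one has $\min_x F'(x)\le\max_{K''}F'(x_0)<\infty$), so strong LP duality applies.

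Next I would form the dual, attaching multipliers $\mu(x)\ge0$ to the curvature constraints and $\alpha(x,y),\beta(x,y)\ge0$ to the two bounds on $g(x,y)$. The free variable $R$ yields the equality $\sum_x\mu(x)=1$; the free variables $g(x,y)$ yield $\alpha(x,y)+\beta(x,y)=\mu(x)$; and the sign constraint $n\ge0$ yields, for each $z\in X_{k+1}$, an inequality to be matched to condition (b). The key move is to repackage the multipliers as a single operator $\opt$ by setting $\opt x(x):=\mu(x)$ and $\opt y(x):=\tfrac{m(y)}{m(x)}(\beta(x,y)-\alpha(x,y))$ for $y\ne x$. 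Then $\Tr\opt=\sum_x\mu(x)=1$ is exactly (c); the bound $|\beta-\alpha|\le\alpha+\beta=\mu(x)$ together with $\alpha,\beta\ge0$ is equivalent to $\tfrac{|\opt y(x)|}{m(y)}\le\tfrac{\opt x(x)}{m(x)}$, i.e.\ (a); and unwinding $\de^* n\de y(x)=\sum_{z\more x,y}\tfrac{1}{m(x)}\de x(z)\de y(z)\,n(z)$ turns the $n(z)$-dual inequality into $\de\opt\de^* z(z)\le0$, i.e.\ (b). It remains to check that the dual objective $\sum_x\de\de^* x(x)\mu(x)+\sum_{x,y\ne x}\de\de^* y(x)(\beta(x,y)-\alpha(x,y))$ equals $\Tr(\de\de^*\opt)$; writing $\Tr(\de\de^*\opt)=\sum_{x,y}\opt x(y)\de\de^* y(x)$, relabelling the off-diagonal sum and inserting the self-adjointness identity $\de\de^* x(y)=\tfrac{m(x)}{m(y)}\de\de^* y(x)$ reproduces exactly the dual objective. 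Since every feasible $\opt$ arises from some dual-feasible $(\mu,\alpha,\beta)$ (solve $\alpha+\beta=\opt x(x)$, $\beta-\alpha=\tfrac{m(x)}{m(y)}\opt y(x)$ for $\alpha,\beta\ge0$, which is possible precisely when (a) holds), strong duality yields the asserted equality.

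For the equivalence of (a) and (a'), I would argue directly. For (a)$\Rightarrow$(a'), if $2\langle x,f\rangle\ge\|f\|_1$ and $f\ne0$ then $f(x)>0$ and $f(x)m(x)\ge\sum_{y\ne x}|f(y)|m(y)$; bounding $\opt f(x)\ge\opt x(x)f(x)-\sum_{y\ne x}|\opt y(x)||f(y)|$ and inserting $|\opt y(x)|\le\tfrac{m(y)}{m(x)}\opt x(x)$ from (a) gives $\opt f(x)\ge\tfrac{\opt x(x)}{m(x)}\big(f(x)m(x)-\sum_{y\ne x}|f(y)|m(y)\big)\ge0$. For (a')$\Rightarrow$(a), I test with $f:=m(y_0)1_x-\sgn(\opt y_0(x))\,m(x)1_{y_0}$, which satisfies $2\langle x,f\rangle=\|f\|_1=2m(x)m(y_0)$; then (a') forces $\opt f(x)=\opt x(x)m(y_0)-|\opt y_0(x)|m(x)\ge0$, which is precisely (a) for the pair $(x,y_0)$.

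The main obstacle is the weighted bookkeeping rather than any conceptual difficulty in the duality: the operator entries satisfy $\opt y(x)=\tfrac{m(y)}{m(x)}\opt^* x(y)$, so asymmetric factors $m(x)/m(y)$ appear at every stage, and the scheme only closes up because condition (b) is stated with $\de\de^*$ and $\de\opt\de^*$ carrying compensating weights, and because self-adjointness of $\de\de^*$ converts the transposed objective sum $\sum_{x,y}\opt x(y)\de\de^* y(x)$ back into the form produced by the dual. Making these weight factors cancel consistently is where the care is needed.
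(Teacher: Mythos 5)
Your proposal is correct and follows essentially the same route as the paper: encode $\max_{K'\le_k K}\min_x F'(x)$ as a linear program in $R$, the cell weights $n$, and auxiliary variables $g(x,y)$ majorizing the absolute values, pass to the LP dual, and repackage the dual multipliers into the operator $\opt$ so that the dual constraints become (a), (b), (c) and the objective becomes $\Tr(\de\de^*\opt)$ via self-adjointness of $\de\de^*$ on $\ell_2(X,m)$. The differences are only cosmetic (sign/normalization conventions for the multipliers, and the test function used for (a')$\Rightarrow$(a), which is a rescaling of the paper's $f=x/m(x)\pm y/m(y)$); your explicit checks of strong-duality applicability and of the surjectivity of the map $(\mu,\alpha,\beta)\mapsto\opt$ onto the feasible set are welcome additions rather than deviations.
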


\begin{proof}
As in the proof of Theorem~\ref{thm:dualForman}, we can write $H_k' = \de \de^* + \de^*n\de$ for some non-negative $n \in C(X_{k+1})$. Hence,
\[
m(x)H_k'x(x) = \langle n, (\de x)^2 \rangle + \langle\de^*x,\de^*x \rangle.
\]
We express the maximization of the Forman curvature as a linear program. To this end, we encode $m(x)|Hy(x)|$ as $g(x,y)$. The linear program contains the variables $R \in \R$ and $0 \leq n \in C(X_{k+1})$ and $g(x,y) \in \R$ for $x \neq y \in X_k$ and is given by the following.

\vspace{0.5cm}
\noindent
Maximize
\[
R
\]
subject to
\begin{align*}
&\langle n, (\de x)^2 \rangle - \sum_y &&g(x,y)  - Rm(x) &&\geq - \langle\de^*x,\de^*x \rangle, &&x \in X_k &&\qquad (R(x) \leq 0)  
\\
&-\langle n, \de x \de y \rangle + &&g(x,y) &&\geq  \langle\de^*x,\de^*y \rangle  && x\neq y \in X_k && \qquad (c_-(x,y) \leq 0)
\\
&\langle n, \de x \de y \rangle + &&g(x,y) &&\geq  -\langle\de^*x,\de^*y \rangle  && x\neq y \in X_k && \qquad (c_+(x,y) \leq 0)
\end{align*}
The first condition encodes the curvature bound $F'(x) \geq R$ for each $x$, and the latter two conditions encode $g(x,y)=m(x)|Hy(x)|$.
The dual variable names are written in parentheses behind every condition.

The dual linear program is

\vspace{0.5cm}
\noindent
Minimize
\[
\sum_{x \neq y}  (c_-(x,y) - c_+(x,y))  \langle \de^*y,\de^*x \rangle  - \sum_x   R(x)\langle \de^*x,\de^*x \rangle
\]
subject to
\begin{align*}
- R(x)  + c_-(x,y) + c_+(x,y)   &=    0          &&(g(x,y) \in \R) \\
\sum_x    (\de x)^2(z)R(x) +  \sum_{x\neq y}   \de y(z) \de x(z) \left( c_+(x,y) - c_-(x,y)  \right)   &\geq    0		&&(n(z) \geq 0) \\
-\sum_x R(x) m(x) &= 1   &&(R \in \R)
\end{align*}
We set
\[
\opt y(x) := m(y)\left(c_-(x,y) - c_+(x,y) - R(x)1_{x=y} \right).
\]
Then, the term to minimize becomes
\[
\sum_{x,y}\frac{\opt y(x)}{m(y)}\langle \de^*x, \de^*y \rangle = \sum_y \de \de^*\opt y(y) = \Tr(\de \de^*\opt).
\]
The first condition is equivalent to
\[
\frac{\opt x(x)}{m(x)} \geq \frac{|\opt y(x)|}{m(y)} \mbox{ for all } x,y \in X_k.
\]
The second condition becomes
\[
0 \geq \sum_{x,y}\frac{\opt  y(x)}{m(y)}\de x(z) \de y(z) = \sum_y \de \opt y(z) \frac{\delta y(z)}{m(y)} = \frac 1 {m(z)}\sum_y \de \opt y(z) \de^*z(y) = \frac 1 {m(z)} \de \opt \de^* z (z)
\]
for all $z \in X_{k+1}$. The third condition becomes
\[
1=\sum_x \opt x(x) = \Tr (\opt).
\]
Putting together finishes the proof of the characterization of the optimal Forman curvature lower bound. We finally show that $(a)$ and $(a')$ are equivalent. The implication $(a') \Rightarrow (a)$ follows by setting $f = x/m(x) \pm y/m(y)$. We now prove the implication $(a) \Rightarrow (a')$.
We have
\begin{align*}
\opt f(x) \geq \frac{\opt x(x)}{m(x)} m(x)f(x) - \sum_{y\neq x} \frac{|\opt y(x)|}{m(y)} m(y)|f(y)| &\geq \frac{\opt x(x)}{m(x)} \left( m(x)f(x) - \sum_{y\neq x}m(y)|f(y)| \right) \\
&\geq \frac{\opt x(x)}{m(x)} \left( 2m(x)f(x) - \sum_{y}m(y)|f(y)| \right) \\
&=\frac{\opt x(x)}{m(x)} \left(2\langle x,f \rangle - \|f\|_1 \right)\\
&\geq 0
\end{align*}
whenever $2\langle x,f \rangle \geq \|f\|_1$.
This proves $(a) \Rightarrow (a')$ and finishes the proof.
\end{proof}

\section{Ollivier curvature}

Ollivier curvature was introduced by Yann Ollivier as a Ricci curvature notion for random walks with a given distance \cite{ollivier2007ricci,ollivier2009ricci}. 
Ollivier's original definition considers discrete time random walks. Lin, Lu, and Yau adapted Ollivier's definition by taking a limit of lazy random walks \cite{lin2011ricci}. The relation between the lazyness and the curvature has been investigated in \cite{bourne2018ollivier}. The relation between curvature and the geometrically descriptive clustering coefficient is demonstrated in \cite{jost2014ollivier}.
Later, Lin, Lu and Yau's modification of Ollivier curvature was generalized to weighted graphs in \cite{munch2017ollivier}.
We now explain how to adapt the definition to cell complexes.
Let $(X,\de,m)$ be a cell complex. Its 1-skeleton is a weighted graph.
According  to \cite[Theorem~2.1]{munch2017ollivier} and using $H_0 = \de^* \de$, the Ollivier curvature of an edge $x \in X_1$ can be written as
\[
\kappa(x) = \inf_{\substack {\de f(x)=1 \\|\de f| \leq 1 
\\ f \in C(X_0)
}} \de \de^* \de f(x). 
\]
We note that we can drop the condition $f \in C(X_0)$ as $\de \de^* \de f(x)=0$ for all $f \in C(X_\pcell)$ with $\pcell>0$.
The generalization to all cells $x \in X$ is immediate.
For $x \in X$, we define
\[
\kappa(x) := \inf_{\substack {\de f(x)=1 \\|\de f| \leq 1 
}} \de \de^* \de f(x). 
\]

\subsection{A one-form characterization of Ollivier curvature}

In the definition of Ollivier curvature of an edge $\kappa(x)=\inf_{\de f(x)=1, |\de f|=1} \de \de^*\de f(x)$, the function $f \in C(X_0)$ only appears as $\de f$. So we obviously get
\[
\kappa(x)= \inf_{\substack{h(x)=1 \\ |h| \leq 1 \\ h \in \de(C(X_0))}}\de \de^* h(x)
\]
In the following theorem, we give a similar expression for Ollivier curvature, however we replace the condition that $h$ is in the range of $\de$ by the condition $\de x \de h \leq 0$.

\begin{theorem}\label{thm:OllivierVectorField} Let $(X,\de,m)$ be a cell complex. Let $x \in X_1$.
Suppose all cycles containing $x$ and having length at most five, are 2-cells. 
Then,
\[
\kappa(x) = 
\inf_{\substack{h(x)=1\\|h|\leq 1 \\ \de x \cdot \de h\leq 0}} \de \de^*h(x).
\]
For the infimum, we only need to consider $h \in C(X_1)$. 
\end{theorem}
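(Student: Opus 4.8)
The plan is to compare the asserted right-hand side with the one-form characterization $\kappa(x)=\inf\{\de\de^* h(x): h(x)=1,\ |h|\le 1,\ h\in\de(C(X_0))\}$ established just above, and write $\kappa'(x)$ for the infimum in the statement. Since $\de\de^* h(x)$ and all the constraints depend only on $h|_{X_1}$ (the value $\de^* h(v)$ at a vertex only sees $1$-cells $y\more v$, and $\de h(z)$ for a $2$-cell $z$ only sees $1$-cells $y\less z$), it suffices to range over $h\in C(X_1)$; this is also how I would justify the final sentence of the statement. The inequality $\kappa'(x)\le\kappa(x)$ is immediate: every exact form $h=\de f$ is $\kappa'$-feasible, because $\de h=\de\de f=0$ forces $\de x\cdot\de h=0\le 0$, so the $\kappa'$-feasible set contains the $\kappa$-feasible set and the infimum can only drop.

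For the reverse inequality I would fix a $\kappa'$-feasible $h$ and build a $1$-Lipschitz $f:X_0\to\R$ with $f(a)=0$ and $f(b)=1$ (where $x=(a,b)$ with $\de a(x)=-1$, $\de b(x)=1$), whose gradient is $\kappa$-feasible and satisfies $\de\de^*\de f(x)\le\de\de^* h(x)$; taking the infimum over $h$ then gives $\kappa(x)\le\kappa'(x)$. Writing $h_{v\to w}$ for the signed value of $h$ on the oriented edge from $v$ to $w$ and expanding $\de\de^* h(x)=\de a(x)\de^* h(a)+\de b(x)\de^* h(b)$, the objective equals
\[
\frac1{m(a)}\sum_{u\sim a}m(a,u)\,h_{a\to u}\;-\;\frac1{m(b)}\sum_{w\sim b}m(b,w)\,h_{b\to w},
\]
which increases in the $a$-side values and decreases in the $b$-side values. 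Writing $N(v)$ for the set of neighbours of $v$, I would define the McShane-type function $f(z):=\min_{u\in\{a\}\cup N(a)}\big(\ell(u)+d(z,u)\big)$ with $\ell(a):=0$ and $\ell(u):=h_{a\to u}$. This $f$ is automatically $1$-Lipschitz (a minimum of $1$-Lipschitz functions) and satisfies $f(u)\le h_{a\to u}$ for $u\in N(a)$, while $|h|\le 1$ gives $f(a)=0$.

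It then remains to verify $f(b)=1$ and the one-sided bound $f(w)\ge 1+h_{b\to w}$ for every $w\in N(b)$. Granting these, the first sum in the objective can only decrease and the subtracted second sum can only increase when $h$ is replaced by $\de f$, so $\de\de^*\de f(x)\le\de\de^* h(x)$, and $\de f$ is $\kappa$-feasible since $\de f(x)=f(b)-f(a)=1$ and $f$ is $1$-Lipschitz. Each required inequality $\ell(u)+d(w,u)\ge 1+h_{b\to w}$ is automatic once the relevant distance is at least $3$ (here $|h|\le 1$ leaves enough slack), so only distances $0,1,2$ matter, and these are exactly the configurations the hypothesis controls: a common neighbour of $a$ and $b$ gives a length-$3$ cycle through $x$, a pair $u\in N(a)$, $w\in N(b)$ with $u\sim w$ gives a length-$4$ cycle, and a pair with $d(u,w)=2$ gives a length-$5$ cycle. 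By assumption all of these are $2$-cells, so in each case the constraint $\de x\cdot\de h\le 0$ delivers precisely the needed estimate (for instance the triangle yields $h_{a\to u}\ge 1+h_{b\to u}$, giving both $f(b)=1$ and $h_{b\to u}\le 0$).

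I expect the main obstacle to be orientation bookkeeping: for each relevant $2$-cell $z$ one must match the sign $\de x(z)$ against the direction in which the holonomy bound is required, and one must handle the degenerate length-$5$ configurations where the midpoint $p$ coincides with $a$ or $b$, which collapse the cycle and must be routed back to the length-$3$ or length-$4$ case. The favourable point is structural: the constraint $\de x\cdot\de h\le 0$ is one-sided, and that side aligns with the monotonicity of the objective (gradients, with vanishing holonomy, sit exactly on the boundary), so once the sign conventions are fixed the construction closes and yields $\kappa(x)=\kappa'(x)$.
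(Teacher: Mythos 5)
Your proposal is correct and follows essentially the same route as the paper's proof: the easy inclusion via exact forms $h=\de f$ (which have $\de h=0$), and for the converse the McShane-type extension $f(w')=\min_{v'\in B_1(v)}\bigl(h(v,v')+d(w',v')\bigr)$, the key claim $f(w')\ge 1+h(w,w')$ proved by reducing to distances $0,1,2$ and invoking the hypothesis that the resulting cycles of length at most five are $2$-cells so that $\de x\cdot\de h\le 0$ yields the holonomy bound $h(v',v)+h(v,w)+h(w,w')\le d(v',w')$, followed by the same monotonicity argument for the objective. The only difference is that you flag the orientation bookkeeping and degenerate configurations as remaining work, which the paper likewise treats lightly.
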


\begin{proof}
We first prove $"\geq"$.
Let $f \in C(X_0)$ with $df(x)=1$ and $|\de f| \leq 1$ and $\kappa(x) = \de\de^*\de f(x)$.
Let $h:=\de f$. Then, $\de h=0$ and thus,
\[
\kappa(x) = \de\de^*g(x) \geq  \inf_{\substack{h(x)=1\\|h|\leq 1 \\ \de x \de h \leq 0}} \de \de^*h(x),
\]
proving $"\geq"$.

We finally prove $"\leq"$.
We note that the infimum is attained due to compactness. Let $h \in C(X_1)$ such that the infimum is attained.

We now aim to construct $f \in C(X_0)$ with $|\de f|\leq 1$ and $\de  f(x)=1$ and $\de \de^*\de f(x) \leq \de \de^*h(x)$ which would finish the proof.
We write $x=(v,w)$.
We define $h(v',v'):=0$ for all $v' \in X_0$.

We now define for $w' \in X_0$,
\[
f(w') := \min_{v' \in B_1(v)} h(v,v') + d(w',v')
\]
Clearly, $|\de f|\leq 1$ as $f$ is defined as a minimum of Lipschitz functions.

By $|h|\leq 1$, we get $f(v)=0$. For $v' \sim v$, we have $f(v') - f(v) = f(v') \leq h(v,v')$.
We now claim that for all $w' \in B_1(w)$,
\[
f(w') \geq 1 + h(w,w')
\]

Let $v' \in B_1(v)$ with $f(w') = h(v,v') + d(v',w')$. If $w'=w$ and $v' = v$, then we are done. If $d(v',w') \geq 3$, then, $f(w') \geq 2$ and we are also done.
Otherwise, there exists a shortest cycle $z$ containing the path  $(v',v,w,w')$.
As $\de x(z)\de h(z) \leq 0$ and $|\de h| \leq 1$, we obtain $h(v',v)+h(v,w) + h(w,w')\leq d(v',w')$ which is equivalent to
\[
f(w')=h(v,v') + d(v',w') \geq 1 + h(w,w')
\]
proving the claim $f(w') \geq 1 + h(w,w')$.
As $f(v)=0$, we get $f(w) \leq g(v,w)+d(w,w)=1$, and together with $f(w') \geq 1 + h(w,w')$, we get $f(w)=1$.
Hence, $f(w')-f(w) \geq h(w,w')$ for $w'\sim w$.

Putting everything together, we get $\partial f(x)=f(w)-f(v)=1$ and with $\markov(v,v'):=m(v,v')/m(v)$,
\begin{align*}
\de \de^*\de f(x) &= \de^*\de f(w) - \de^*\de f(v)  \\
&=\sum_{w'} \markov(w,w')(f(w)-f(w')) - \sum_{v'}\markov(v,v')(f(v)-f(v'))\\
& \leq \sum_{w'} \markov(w,w')g(w,w') - \sum_{v'}\markov(v,v')g(v,v')\\
&=\de \de^* g(x).
\end{align*}
This shows $\de\de^*\de f(x) \leq \de\de^*g(x)$ for some $f \in C(X_0)$ with $\de f(x)=1$ and $|\de f|\leq 1$. This proves "$\leq$" and finishes the proof of the theorem.
\end{proof}

\subsection{A new transport plan for Ollivier curvature}

This section will provide the basis of translating
transport plans to cycle weights. As we have already shown the dual linear program characterization of Forman curvature, this section is not necessary for understanding the coincidence of Forman and Ollivier curvature.

By the Kantorovic duality, the Ollivier curvature can be expressed via optimal transport plans.
Let $(X,\de,m)$ be a 1-dimensional cell complex.
We recall $\markov(v,w)=m(v,w)/m(v)$ for $v,w \in X_0$. 
Let $x=(v,w) \in X_1$.
Then by  \cite[Proposition~2.4]{munch2017ollivier},
\[
\kappa(x) = \sup_{\xi} \sum\xi(v',w')(1-d(v',w'))
\]
where the supremum is taken over all transport plans $\xi:X_0^2 \to [0,\infty)$ satisfying
\[
\sum_{w'} \xi(v',w') = \markov(v,v') \mbox{ for all } v' \neq v
\]
and
\[
\sum_{v'} \xi(v',w') = \markov(w,w')  \mbox{ for all } w' \neq w.
\]

In this section we give a new transport plan characterization of Ollivier curvature. The idea is to replace the restrictions of the old transport plan by penalty terms for the new unrestricted transport plan. We first give some definitions

We recall $v\sim w$ for $v,w \in X_0$ if there exists $x \more v,w$.
We write  $S_1(v) = \{w \in X_0:w \sim v\}$ for the sphere and $B_1(v)=\{v\} \cup S_1(v)$ for the ball with respect to the path distance $d$.

Let $v \sim w \in X_0$. 
We write $N(v,w) := S_1(v) \setminus \{w\}$ and

\[\Pi :=\Pi(v,w) := [0,\infty)^{N(v,w)\times N(w,v)}.\]
For $v' \in N(v,w)$ and $\rho \in \Pi$, we write
\[
A(v') :=A_\rho(v') := \markov(v,v') - \sum_{w'} \rho(v',w') 
\]
and for $w'  \in N(w,v)$, we write
\[
B(w') :=B_\rho(w')  := \markov(w,w') - \sum_{v'} \rho(v',w'). 
\]

The terms $A$ and $B$ can be seen as penalty terms for $\rho$ for deviating from a transport plan in the sense above. We now give the new transport plan characterization of Ollivier curvature.

\begin{theorem}\label{thm:NewTransportplan}
Let $(X,\de,m)$ be a cell complex.
Let $x=(v,w) \in X_1$. Then,
\[
\kappa(x) = \markov(v,w)+\markov(w,v) + \sup_{\rho \in \Pi} \left( \sum_{v',w'}\rho(v',w')(1-d(v',w')) - \sum_{v'} |A_\rho(v')| - \sum_{w'} |B_\rho(w')| \right).
\]
\end{theorem}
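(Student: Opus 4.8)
The plan is to reduce the claim to the single identity
\[
\sup_{\rho\in\Pi}\Phi(\rho)=\kappa(x)-\markov(v,w)-\markov(w,v),\qquad \Phi(\rho):=\sum_{v',w'}\rho(v',w')\bigl(1-d(v',w')\bigr)-\sum_{v'}|A_\rho(v')|-\sum_{w'}|B_\rho(w')|,
\]
and to prove it by linear programming duality against the potential form $\kappa(x)=\inf\{\de\de^*\de f(x):\de f(x)=1,\ |\de f|\le1\}$. First I would dualize the penalized program. Linearizing $|A_\rho|$ and $|B_\rho|$ by auxiliary variables and forming the Lagrangian, the two marginal penalties of weight one turn the multipliers into box constraints, and strong LP duality yields
\[
\sup_{\rho\in\Pi}\Phi(\rho)=\min_{\substack{\alpha\in[-1,1]^{N(v,w)},\ \beta\in[-1,1]^{N(w,v)}\\ \alpha(v')+\beta(w')\ge 1-d(v',w')}}\Bigl(\sum_{v'}\markov(v,v')\alpha(v')+\sum_{w'}\markov(w,w')\beta(w')\Bigr).
\]

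Next I would match this dual to $\kappa$ through the substitution $\alpha(v')=f(v')$, $\beta(w')=1-f(w')$, where $f\in C(X_0)$ is normalised by $f(v)=0$, $f(w)=1$. A short computation using $\sum_{v'}\markov(v,v')=\sum_{w'}\markov(w,w')=1$ gives $\de\de^*\de f(x)=1+\markov(v,w)+\sum_{v'\in N(v,w)}\markov(v,v')f(v')-\sum_{w'\in N(w,v)}\markov(w,w')f(w')$, so the dual objective above equals $\de\de^*\de f(x)-\markov(v,w)-\markov(w,v)$. The box constraints $\alpha(v'),\beta(w')\in[-1,1]$ become $|f(v')-f(v)|\le1$ and $|f(w')-f(w)|\le1$, while $\alpha(v')+\beta(w')\ge1-d(v',w')$ becomes $f(w')-f(v')\le d(v',w')$; hence every $1$-Lipschitz $f$ produces a dual-feasible pair, and taking the infimum over $f$ proves "$\le$" in the boxed identity, i.e.\ $\kappa(x)\ge\markov(v,w)+\markov(w,v)+\sup_\rho\Phi(\rho)$. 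The same inequality can be seen concretely: for $\rho$ with $A_\rho,B_\rho\ge0$, the plan $\xi=\rho$ on $N(v,w)\times N(w,v)$, $\xi(v',w)=A_\rho(v')$, $\xi(v,w')=B_\rho(w')$, $\xi(v,v)=\markov(w,v)$, $\xi(w,w)=\markov(v,w)$ and $\xi=0$ elsewhere is feasible for the constrained transport formula of \cite[Proposition~2.4]{munch2017ollivier}, with objective $\ge\markov(v,w)+\markov(w,v)+\Phi(\rho)$ because $d(v',w),d(v,w')\le2$ (the supremum may be restricted to such $\rho$, since truncating a row with $A_\rho(v')<0$ does not decrease $\Phi$).

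For the reverse inequality I would, from any dual-feasible $(\alpha,\beta)$, construct a genuinely $1$-Lipschitz $f$ on all of $X_0$ with $f(v)=0$, $f(w)=1$, $f(v')\le\alpha(v')$ and $f(w')\ge1-\beta(w')$. Since $\de\de^*\de f(x)$ is nondecreasing in each $f(v')$ and nonincreasing in each $f(w')$, these bounds force $\de\de^*\de f(x)\le\markov(v,w)+\markov(w,v)+(\text{dual objective})$, and minimising over $(\alpha,\beta)$ yields $\kappa(x)\le\markov(v,w)+\markov(w,v)+\sup_\rho\Phi(\rho)$. Existence of $f$ I would obtain from the McShane-type extension $f(u)=\max_a\bigl(L(a)-d(u,a)\bigr)$, which is $1$-Lipschitz and respects prescribed lower bounds $L$ and upper bounds $U$ exactly when $L(a)-U(b)\le d(a,b)$ for every relevant pair. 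Note this argument uses only the definition of $\kappa$, so no cycle-length hypothesis is needed.

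The main obstacle is precisely this last consistency check, because it is where the weight-one penalty is tight. Away from triangles the inequalities $L(a)-U(b)\le d(a,b)$ are immediate from the box and coupling constraints, but for a common neighbour $u\in N(v,w)\cap N(w,v)$ one has $d(u,w)=d(v,u)=1$, so the pairs with $a=w$, $b=u$ and with $a=u$, $b=v$ threaten to violate $L-U\le d$; here I would invoke the diagonal coupling constraint $\alpha(u)+\beta(u)\ge1-d(u,u)=1$, which forces $\alpha(u)\ge0$ and $\beta(u)\ge0$ and thereby rescues both inequalities. Verifying that these diagonal constraints cover every triangle case — equivalently, that penalty weight $1$ equals the sharp Lipschitz bound and hence exactly penalizes the relaxed marginals — is the crux; the remaining pairwise checks are routine.
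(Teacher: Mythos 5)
Your argument is correct in substance but follows a genuinely different route from the paper's. The paper proves ``$\kappa\geq$'' by pairing an optimal Kantorovich potential $f$ directly against an arbitrary $\rho\in\Pi$ (expanding $\Delta f(v)-\Delta f(w)$ through $A_\rho,B_\rho$ and using the Lipschitz bound), and proves ``$\kappa\leq$'' by an explicit construction: starting from an optimal \emph{constrained} transport plan $\xi$ of \cite[Proposition~2.4]{munch2017ollivier}, it folds the mass $\xi(\cdot,w)$, $\xi(v,\cdot)$ and the diagonal into the penalized plan $\rho(v',w')=\xi(v',w')+(\xi(v',w)+\xi(v,w'))1_{v'=w'}$ and checks that the penalties exactly absorb the discarded mass. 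You instead dualize the penalized program once and for all, obtaining the potential-type dual with box constraints and coupling constraints $\alpha(v')+\beta(w')\geq 1-d(v',w')$, and close the loop with a McShane extension; your identification of the diagonal constraints $\alpha(u)+\beta(u)\geq 1$ at common neighbours as the crux is exactly right, and it is the precise counterpart of the paper's diagonal term $1_{v'=w'}$. Your route never needs an optimal constrained plan (only strong LP duality and the extension lemma) and makes transparent why the penalty weight must be $1$; the paper's route exhibits near-optimizers on both sides explicitly and avoids any extension argument.

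Two blemishes, neither fatal. First, your intermediate formula invokes $\sum_{v'}\markov(v,v')=1$, which does not hold for general weights (that sum is $\Deg(v)$); fortunately the identity you actually use --- that the dual objective equals $\de\de^*\de f(x)-\markov(v,w)-\markov(w,v)$ under $\alpha=f$, $\beta=1-f$ --- is true without any normalization, since $\de\de^*\de f(x)=\markov(v,w)+\markov(w,v)+\sum_{v'}\markov(v,v')f(v')+\sum_{w'}\markov(w,w')(1-f(w'))$ directly. Second, the parenthetical claim that the supremum may be restricted to $\rho$ with $A_\rho,B_\rho\geq 0$ by truncation is false: adding mass $c$ to a diagonal entry $\rho(u,u)$ at a common neighbour $u$ (where $1-d(u,u)=1$) when $A_\rho(u)=0$ and $B_\rho(u)\geq c$ changes the penalized objective by $+c-c+c=+c$, so overshooting the $v$-marginal can strictly help --- indeed the paper's optimal $\rho$ has $A_\rho(v')=-\xi(v,v')<0$ at common neighbours. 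Since that passage only re-derives an inequality you had already obtained from weak duality, it should simply be deleted.
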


\begin{proof}
We first prove "$\kappa \geq \;$".
Let $\rho \in \Pi$. Let $f \in Lip(1)$ with $f(w)-f(v)=1$ and
$\kappa(v,w) = \Delta f(v) - \Delta f(w)$.

Then,
\begin{align*}
\Delta f(v) &= \markov(v,w) + \sum_{v' \in N(v,w)} \markov(v,v') (f(v')-f(v)) \\
&= \markov(v,w) + \sum_{v'} A(v')(f(v')-f(v)) + \sum_{v',w'}  \rho(v',w')  (f(v') - f(v)).
\end{align*}
Similarly,
\[
\Delta f(w) = \markov(w,v) + \sum_{w'} A(w')(f(w') - f(w)) + \sum_{v',w'} \rho(v',w')(f(w')-f(w)).
\]
Hence,
\begin{align*}
\Delta f(v)-\Delta f(w) &= \markov(v,w) + \markov(w,v) +  \sum_{v'} A(v')(f(v')-f(v))  - \sum_{w'}B(w')(f(w')-f(w))   \\& \hspace{2.9cm} +\sum_{v',w'}\rho(v',w')(f(v')-f(w') + 1)
\\&\geq \markov(v,w) + \markov(w,v) - \sum_{v'}|A(v')| - \sum_{w'}|B(w')|  + \sum_{v',w'} \rho(v',w')(1-d(v',w'))
\end{align*}
where we used $f \in Lip(1)$ in the estimate.
Taking supremum over $\rho \in \Pi$ proves "$\kappa \geq \;$".

We finally prove "$\kappa \leq \;$".

Let $\xi :X_0^2 \to [0,\infty)$ be an optimal transport plan, i.e.
$\sum_{w'}\xi(v',w') =\markov(v,v')$ for all $v' \in S_1(v)$ and
$\sum_{v'} \xi(v',w') = \markov(w,w')$ for all $w' \in S_1(w)$ and
\[
\kappa(v,w) = \sum_{v',w'} \xi(v',w')(1-d(v',w')).
\]
For $v' \in N(v,w)$ and $w' \in N(w,v)$, we define
\[
\rho(v',w') := \xi(v',w') + (\xi(v',w) + \xi(v,w'))\cdot 1_{v'=w'}.
\]
We calculate
\[
A(v') = \markov(v,v') - \sum_{w' \in N(w,v) }\xi(v',w')  - \sum_{w' \in N(w,v)} (\xi(v',w) + \xi(v,w')) \cdot 1_{v'=w'} = \xi(v',w)   -  (\xi(v',w) + \xi(v,v'))1_{v'\sim w}
\]
giving
\[
|A(v')| = \begin{cases}
\xi(v,v') &:v' \sim w \\
\xi(v',w) &: \mbox{else}.
\end{cases}
\]
Similarly for $w \in N(w,v)$,
\[
|B(w')| = \begin{cases}
\xi(w',w) &:w' \sim v \\
\xi(v,w') &: \mbox{else}.
\end{cases}
\]
Adding up gives
\[
\sum_{v'} |A(v')| + \sum_{w'} |B(w')| =  \sum_{v' \in N(v,w)} \xi(v',w) + \sum_{w' \in N(w,v)} \xi(v,w')
\]
Moreover,
\[
\sum_{v',w'}\rho(v',w')(1-d(v',w')) = \sum_{\substack{v'\in N(v,w) \\ w' \in N(w,v)}}  \xi(v',w')(1-d(v',w'))   +  \sum_{v' \in N(v,w) \cap N(w,v)} \left(\xi(v',w) + \xi(v,v') \right).
\]
Combining the latter sum with the sum of $|A|$ and $|B|$ gives
\begin{align*}
 &\sum_{v' \in N(v,w) \cap N(w,v)} \left(\xi(v',w) + \xi(v,v') \right)-\sum_{v'} |A(v')| - \sum_{w'} |B(w')|  \\=&  \sum_{v' \in N(v,w)} \xi(v',w) (1-d(v',w)) +  \sum_{w' \in N(w,v)} \xi(v,w')(1-d(v,w'))
\end{align*}
where we used $d(v',w) = 1$ if $w'$ is a common neighbor of $v$ and $w$ and $d(v',w) = 2$ otherwise for $v' \sim v$, and a similar relation for $d(v,w')$.
We summarize
\begin{align*}
&\sum_{v',w'}\rho(v',w')(1-d(v',w'))  -\sum_{v'} |A(v')| - \sum_{w'} |B(w')| \\=&\left( \sum_{\substack{v' \in N(v,w)\\w' \in N(w,v)}}  +  \sum_{\substack{v' \in N(v,w)\\w'=w}} + \sum_{\substack{w' \in N(w,v)\\v'=v}} \right) \xi(v',w')(1-d(v',w')) \\
=& \sum_{v',w'}\xi(v',w')(1-d(v',w')) - \xi(v,v)-\xi(w,w)  \\
\geq& \kappa(v,w) - \markov(w,v) - \markov(v,w)
\end{align*}
where we used $\xi(v,v) \leq \markov(w,v)$ and $\xi(w,w) \leq \markov(v,w)$ in the  estimate.
Rearranging proves "$\kappa \leq \;$" and finishes the proof.
\end{proof}

\section{Relations between Ollivier and Forman curvature}

We first show that Ollivier and Forman curvature coincide on 1-cells with optimal choice of the weights of the 2-cells. Coincidence for regular unweighted graphs for which every two different cycles of length at most five share at most one edge, was recently shown in \cite{tee2021enhanced} by a cycle counting argument.

\begin{corollary}\label{cor:KequalsF}
Let $G=(X,\de,m)$ be a 1-dimensional cell complex. 
Then for $x \in C(X_1)$,
\begin{align*}
\kappa(x) = \max_K F_K(x)
\end{align*}
where the maximum is taken over all regular weighted cell complexes $K$ having $G$ as 1-skeleton.
\end{corollary}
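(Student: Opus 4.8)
The plan is to notice that the right-hand side of Theorem~\ref{thm:dualForman}, specialized to $\pcell=1$, is \emph{literally the same} optimization problem as the right-hand side of Theorem~\ref{thm:OllivierVectorField}, namely
\[
\min_{\substack{h(x)=1\\|h|\leq 1\\\de x \cdot \de h \leq 0}} \de \de^* h(x),
\]
and then to apply both theorems to one and the same auxiliary cell complex. The only way the choice of $2$-cells enters either minimization is through the constraint $\de x \cdot \de h \leq 0$, since for $h \in C(X_1)$ the objective $\de\de^* h(x)$ depends only on the $1$-skeleton. So the whole argument reduces to selecting a complex for which the two constraint sets coincide and for which both theorems are simultaneously applicable.

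First I would introduce the \emph{maximal} extension $K_{\max}$ of $G$: using the construction that turns cycles into $2$-cells, declare \emph{every} cycle of $G$ to be a $2$-cell, each with weight $1$. That construction guarantees $K_{\max}$ is a genuine regular weighted cell complex with $1$-skeleton $G$. Next I would establish
\[
\max_{K} F_K(x) = \max_{K' \leq_1 K_{\max}} F'(x).
\]
For a $1$-cell $x$, the value $F(x)=Hx(x)-\sum_{y\neq x}|Hy(x)|$ involves only vertices $v\less x,y$ and $2$-cells $z\more x,y$, hence depends only on the $2$-skeleton; thus it suffices to maximize over $2$-dimensional complexes. By the one-to-one correspondence between $2$-cells and cycles, every such complex has its $2$-cells among the cycles of $G$ and therefore satisfies $K \leq_1 K_{\max}$. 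Conversely the parametrization $n = m'|_{X_2}\geq 0$ used in the proof of Theorem~\ref{thm:dualForman} realizes every assignment of $2$-cell weights, with $n(z)=0$ corresponding to deleting the cell $z$; so the two maxima agree.

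With $K_{\max}$ fixed I would chain the two theorems. Theorem~\ref{thm:dualForman} with $\pcell=1$ and $K=K_{\max}$ gives
\[
\max_{K'\leq_1 K_{\max}} F'(x) = \min_{\substack{h(x)=1\\|h|\leq 1\\\de x\cdot\de h\leq 0}}\de\de^* h(x),
\]
where the constraint runs over the $2$-cells of $K_{\max}$, i.e.\ over all cycles of $G$. Since $K_{\max}$ in particular contains every cycle through $x$ of length at most five as a $2$-cell, the hypothesis of Theorem~\ref{thm:OllivierVectorField} is satisfied, and that theorem yields
\[
\kappa(x) = \min_{\substack{h(x)=1\\|h|\leq 1\\\de x\cdot\de h\leq 0}}\de\de^* h(x),
\]
with the constraint again over all $2$-cells of $K_{\max}$. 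The two minimization problems being identical, combining the displays gives $\kappa(x)=\max_K F_K(x)$.

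The step I expect to require the most care is justifying that enlarging the set of $2$-cells all the way to \emph{every} cycle does not disturb the Ollivier side. A priori, adjoining the long cycles only adds constraints $\de x\cdot\de h\leq 0$ and could raise the minimum above $\kappa(x)$. The point to make rigorous is that Theorem~\ref{thm:OllivierVectorField} applies \emph{verbatim} to $K_{\max}$, because its hypothesis asks only that the short cycles (length at most five) be $2$-cells, which holds a fortiori; its conclusion then pins the value of the enlarged minimization to $\kappa(x)$ irrespective of the extra constraints. The remaining bookkeeping—that the maximum of $F_K(x)$ over arbitrary cell complexes with $1$-skeleton $G$ reduces to the maximum over $K'\leq_1 K_{\max}$, and that the $n\geq0$ parametrization genuinely captures cell deletion via $n(z)=0$—is routine given the results already established.
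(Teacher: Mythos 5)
Your proposal is correct and follows essentially the same route as the paper: both proofs identify the right-hand sides of Theorem~\ref{thm:dualForman} (with $\pcell=1$) and Theorem~\ref{thm:OllivierVectorField} as the same linear program over $h$ and chain the two equalities through a fixed auxiliary complex. The only difference is that you fix $K_{\max}$ containing \emph{all} cycles as $2$-cells while the paper fixes the complex containing only the cycles of length at most five; your choice makes the reduction $\max_K F_K(x)=\max_{K'\leq_1 K_{\max}}F'(x)$ immediate, and you correctly observe that the extra constraints $\de x\cdot\de h\leq 0$ coming from long cycles do not raise the Ollivier-side infimum because Theorem~\ref{thm:OllivierVectorField} applies verbatim to $K_{\max}$.
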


\begin{proof}

We fix the 2-dimensional cell complex $K$ with $G$ as 1-skeleton and all cycles with length at most five as 2-cells. We set the weight of 2-cells equal one. 
Combining Theorem~\ref{thm:OllivierVectorField} and Theorem~\ref{thm:dualForman} for $\pcell=1$, we obtain
\begin{align*}
\kappa(x) = \inf_{\substack{h(x)=1\\|h|\leq 1 \\ \de x \cdot \de h\leq 0}} \de \de^*h(x) = \max_{K' \leq_\pcell K} F'(x) = \max_X F_X(x).
\end{align*}
This finishes the proof.
\end{proof}

We now show that in arbitrary dimension, we still have an estimate between Ollivier and Forman curvature.

\begin{corollary}\label{cor:FleqKappa}
Let $K=(X,\de,m)$ be a cell complex and $x \in X$.
Then,
\[
F(x) \leq \kappa(x).
\]
\end{corollary}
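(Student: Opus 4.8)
The plan is to establish $F(x) \leq \kappa(x)$ for an arbitrary cell $x \in X$ in a cell complex of any dimension by comparing their respective dual characterizations as infima over one-forms. Recall that Theorem~\ref{thm:dualForman}, applied with $\pcell = \dim(x)$ and the specific complex $K' = K$ itself (i.e., not optimizing over higher cell weights), already shows that the Forman curvature $F(x)$ equals the optimal value of a minimization problem, and in particular is bounded above by the value achieved for any admissible $h$. The Ollivier curvature $\kappa(x)$ is itself an infimum of $\de\de^*\de f(x)$ over functions $f$ with $\de f(x) = 1$ and $|\de f| \leq 1$. So the natural strategy is: take a near-optimal $f$ for the Ollivier problem, set $h := \de f$, and show that $h$ is admissible for the Forman minimization problem of Theorem~\ref{thm:dualForman}, and that the objective values match up.

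First I would make precise the comparison at the level of the dual programs. The Forman minimization from Theorem~\ref{thm:dualForman} (taking $K' = K$, so no weight optimization) reads
\[
F(x) \leq \min_{\substack{h(x)=1 \\ |h| \leq 1 \\ \de h \cdot \de x \leq 0}} \de\de^* h(x),
\]
where the inequality holds because fixing $K'=K$ gives a particular feasible complex, whereas the right-hand side is the value after maximizing over all $K' \leq_\pcell K$; one must check the direction carefully, since Theorem~\ref{thm:dualForman} gives equality only after the outer maximization. This is the subtle point. The cleanest route is: for any single admissible $h$, we directly have $F(x) \leq \de\de^* h(x)$ because $F(x)$ is a value for the unmaximized complex and lies below the maximized Forman value, which in turn equals the minimum over admissible $h$ and hence is $\leq \de\de^* h(x)$ for each feasible $h$.

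Next I would produce a feasible $h$ from the Ollivier side. Given $f \in C(X_0)$ with $\de f(x) = 1$ and $|\de f| \leq 1$, set $h := \de f \in C(X_1)$. Then $h(x) = \de f(x) = 1$ and $|h| = |\de f| \leq 1$, so the first two constraints hold. For the third constraint $\de h \cdot \de x \leq 0$: since $h = \de f$ lies in the image of $\de$ and $\de\de = 0$, we get $\de h = \de\de f = 0$, so $\de h \cdot \de x = 0 \leq 0$ and the constraint is satisfied (in fact with equality). Moreover the objective becomes $\de\de^* h(x) = \de\de^*\de f(x)$, which is exactly the Ollivier objective. Combining, for every such $f$ we obtain
\[
F(x) \leq \de\de^*\de f(x),
\]
and taking the infimum over all admissible $f$ yields $F(x) \leq \kappa(x)$.

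The main obstacle I anticipate is getting the inequality direction right in invoking Theorem~\ref{thm:dualForman}: that theorem states an \emph{equality} between the maximized Forman curvature and the minimized one-form objective, whereas here we want a bound on the \emph{unmaximized} $F(x)$. The resolution is simply that $F(x) = F_K(x) \leq \max_{K' \leq_\pcell K} F'(x)$ since $K$ is among the admissible $K'$, and this maximum equals the minimum over feasible $h$ by Theorem~\ref{thm:dualForman}, hence is at most $\de\de^* h(x)$ for the particular feasible $h = \de f$. A second point worth a sentence is that we do not need any hypothesis on short cycles being $2$-cells here (unlike in Theorem~\ref{thm:OllivierVectorField}): we only use the trivial direction that $\de f$ is a feasible competitor, so the estimate holds in full generality and in arbitrary dimension, which is precisely the content of the corollary.
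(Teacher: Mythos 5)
Your proposal is correct and follows essentially the same route as the paper: the paper's proof is precisely the chain $F(x) \leq \inf_{h}\de\de^*h(x) \leq \inf_{f}\de\de^*\de f(x) = \kappa(x)$, with the first inequality resting on Theorem~\ref{thm:dualForman} (since $K$ itself is admissible in the maximization) and the second on the observation that $h=\de f$ is feasible because $\de h = \de\de f = 0$. The only cosmetic quibble is that for a general cell $x$ the competitor $f$ need not lie in $C(X_0)$, but your argument applies verbatim to any $f$ with $\de f(x)=1$ and $|\de f|\leq 1$.
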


\begin{proof}
We estimate
\[
F(x) \leq \inf_{\substack{h(x)=1\\|h|\leq 1 \\ \de x \cdot \de h\leq 0}} \de \de^*h(x) \leq \inf_{\substack{\de f(x)=1\\|\de f |\leq 1}} \de \de^*\de f(x) = \kappa(x)
\]
where the second inequality holds as we can choose $h=\de f$ giving $\de h =0$ implying $\de x \cdot \de h \leq 0$.
This finishes the proof.
\end{proof}

\subsection{Translation between transport plans and cycle weights}
We now give an alternative proof for $\max_X F_X(x) = \kappa(x)$ for $x \in X_1$ by using Thereom~\ref{thm:NewTransportplan} and by translating transport plans to cycle weights.

\begin{proof}[Alternative proof of Corollary~\ref{cor:KequalsF}]
Let $K=(X,\de,m)$ be a cell complex containing all cycles of length at most 5 as two-cells.
Let $x=(v,w) \in X_1$. We first show how to translate transport plans to the weights of cycles containing $x$.

We recall, the set of transport plans is
\[
\Pi= (0,\infty)^{N(v,w)\times N(w,v)}
\]
with $N(v,w)=S_1(x)\setminus \{y\}$.

Let $v' \in N(v,w)$ and $w'=N(w,v)$.
If the path $\{v',v,w,w'\}$ is contained in a shortest cycle $z\in X_2$ of length at most 5, then, we write $z(v',w') = z$, and we write
$m'(z(v,w)):=\rho(v',w')m(x)$, and zero for all cycles not appearing.

The weights $m'$ together with the 1-skeleton of $K$ give a complex $K'$. We now show that the transport costs of $\rho$ coincide with the Forman curvature of $K'$.

We have
\[
F'(x) = H'x(x) - \sum_{y\neq x} |H'y(x)|
\]
If $y=(v,v')$ with $v' \in N(v,w)$, then,
$|H'y(x)| = |A_\rho(v')|$. Similarly, if $y=(w,w')$  with $w' \in N(w,v)$, then $|Hy(x)| = |B_\rho(w')|$.
Moreover,
$H'x(x) = \markov(v,w)+\markov(w,v) + \sum_{v',w'} \rho(v',w')$.
We recall $\markov(v,w)=m(v,w)/m(v)$.
Finally,
\[
\sum_{y \not\more v,w} |Hy(x)| = \sum_{v',w'}d(v',w')\rho(v',w')
\]
Summing up gives
\[
F'(x) = \markov(v,w)+\markov(w,v) + \sum_{v',w'}\rho(v',w')(1-d(v',w')) - \sum_{v'} |A(v')| - \sum_{w'} |B(w')| \leq \kappa(x).
\] 
The estimate is an equality if $\rho$ is an optimal transport plan by Theorem~\ref{thm:NewTransportplan}. Thus,
\[
\max_{K'} F_{K'}(x) \geq \kappa(x).
\]
Conversely, we can construct a transport plan from the cycle weights $m'$ via
\[
\rho(v',w') := \sum_{z \more (v',v),(v,w),(w,w')} \frac{m'(z)}{m(x)}.
\]
Similar to the calculation above, we get
\[
F_{K'}(x) \leq \markov(v,w)+\markov(w,v) + \sum_{v',w'}\rho(v',w')(1-d(v',w')) - \sum_{v'} |A(v')| - \sum_{w'} |B(w')| \leq \kappa(x).
\] 
We do not get equality in the first estimate as non-shortest cycles can have positive weight.
Choosing $K'$ optimally gives
\[
\max_{K'} F_{K'}(x) \leq \kappa(x).
\]
Together with the converse estimate, this finishes the proof.
\end{proof}

\subsection{Diameter bounds}

A fundamental consequence of uniformly positive Ricci curvature is the Bonnet Myers diameter bound. On graphs, this is well known for Ollivier curvature \cite{ollivier2009ricci} and Bakry Emery curvature \cite{liu2016bakry}, as well as their rigidity results \cite{cushing2020rigidity,liu2017rigidity}. A diameter bound in terms of entropic curvature is also known \cite[Proposition~3.4]{erbar2018poincare}, but looks somewhat different. 
Forman has given an intricate proof of a diameter bound in terms of Forman curvature in case of $m=1$ (see \cite[Theorem~6.3]{forman2003bochner}).
In this subsection, we generalize Forman's result and unify the diameter bounds.

Let $K=(X,\de,m)$ be a cell complex. For $x \in X$, we define
\[
\Deg(x) := \sum_{z\more x} \frac{m(z)}{m(x)} + \max_{v\less x}\frac{m(x)}{m(v)}.
\]
where we set the maximum to be zero if $x \in X_0$.
We write
\[D_k:=\max_{x \in X_k} \Deg(x).\]
We recall the distance bound for Ollivier curvature (see \cite[Proposition~4.14]{munch2017ollivier}) is given by
\[
d(v,w) \leq \frac{\Deg(v) + \Deg(w)}{\kappa(v,w)} \qquad \mbox{ for all } v \neq w \in X_0.
\]
We write $\diam := \max_{v,w \in X_0} d(v,w)$.
Then, the diameter bounds for both, Bakry Emery and Ollivier curvature, read as
\[
\diam \leq \frac{2D_0}R \qquad \mbox{(Ollvier, Bakry Emery)}
\]
where $R$ is either a lower bound for the Bakry Emery or the Ollivier curvature. The proofs can be done via gradient estimates for the heat equation in both cases.
In contrast Forman's diameter bound is proven via discrete Jacobi fields and reads as
\[
\diam \leq \frac{2D_1}R \qquad \mbox{(Forman curvature)}
\]
where $R$ is a lower bound for the Forman curvature in the case $m=1$.
We generalize this result to arbitrary weights by applying the Ollivier diameter bound. It is somewhat surprising that this works as $D_0$ and $D_1$ seem incompatible at first glance.
\begin{theorem}\label{thm:diameterBound}
Let $K=(X,\de,m)$ be a cell complex.
Suppose $F(x) \geq R >0$ for all $x \in X_1$. Then,
\[
\diam \leq \frac {2\min(D_1,D_0)} R. 
\]
\end{theorem}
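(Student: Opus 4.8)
The plan is to establish two separate upper bounds, $\diam \le 2D_0/R$ and $\diam \le 2D_1/R$, and then take their minimum. The first bound is the Ollivier side and is essentially immediate, while the second is the Forman side and carries all the work.

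For the $D_0$-bound, I would first invoke Corollary~\ref{cor:FleqKappa}: since $F(x) \ge R$ for every edge $x \in X_1$, we obtain $\kappa(x) \ge F(x) \ge R$ on all of $X_1$. An edgewise Ollivier lower bound propagates to a pairwise lower bound $\kappa(v,w) \ge R$ for all $v \neq w \in X_0$ (by concatenating optimal couplings along a geodesic), so the recalled Ollivier distance bound $d(v,w) \le (\Deg(v)+\Deg(w))/\kappa(v,w)$, applied to a diameter-realizing pair, yields $\diam \le 2D_0/R$ at once.

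For the $D_1$-bound, the idea is to apply the Ollivier diameter bound again, but to an auxiliary reweighting of the vertices chosen so that the maximal \emph{vertex} degree is controlled by the original \emph{edge} degrees. Since the combinatorial path distance on $X_0$, and hence $\diam$, depends only on the adjacency relation $\sim$ and not on the weight $m$, I am free to replace $m|_{X_0}$ by any $\tilde m|_{X_0}$ without changing $\diam$. I would construct $\tilde m$ so that (i) $\tilde\Deg(v) = \sum_{x \more v} m(x)/\tilde m(v) \le D_1$ for every vertex $v$, and (ii) $\tilde F(x) \ge R$ for every edge $x$. Given (ii), Corollary~\ref{cor:FleqKappa} applied to the reweighted complex gives $\tilde\kappa(x) \ge \tilde F(x) \ge R$, and then the Ollivier diameter bound for $\tilde m$ produces $\diam \le 2\tilde D_0/R \le 2D_1/R$. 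Combining with the $D_0$-bound finishes the proof.

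The main obstacle is constructing this reweighting. Enlarging $\tilde m(v)$ shrinks the vertex degree $\tilde\Deg(v)$, which is good for (i), but it simultaneously shrinks the positive diagonal contribution $m(x)/\tilde m(v)$ to $\tilde F(x)$, which is bad for (ii), while also shrinking the subtracted off-diagonal terms $m(y)/\tilde m(v)$, which is good for (ii). The net effect on the Forman curvature is therefore not monotone in $\tilde m$, so the weights must be tuned carefully; this is exactly the point at which the superficially incompatible quantities $D_0$ and $D_1$ get reconciled. I expect the correct choice to be governed by the $\max_{u \less x}\frac{m(x)}{m(u)}$ term in the definition of $\Deg(x)$, and I would check requirements (i) and (ii) by a direct computation with the explicit edgewise formulas \eqref{eq:absH} and \eqref{eq:Hdiag} for $|Hy(x)|$ and $Hx(x)$. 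Should a clean closed-form reweighting prove intractable, the fallback is to generalize Forman's discrete Jacobi-field argument along geodesics to the weighted setting, but I would pursue the reduction to the Ollivier bound first, since it reuses machinery already established above.
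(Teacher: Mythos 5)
Your $D_0$ half is fine and is essentially what the paper does: $\kappa\geq F\geq R$ on edges, concatenate along a geodesic, apply the pairwise Ollivier bound. The problem is the $D_1$ half. You have correctly identified the strategy (reduce to the Ollivier distance bound via a reweighting that trades vertex degrees for edge degrees), but the reweighting is the entire content of the theorem and your proposal does not produce it; moreover, the version of the reduction you set up is stronger than what is true or needed. You ask for a \emph{global vertex} reweighting $\widetilde m|_{X_0}$ with $\widetilde\Deg(v)\leq D_1$ for \emph{every} vertex and $\widetilde F(x)\geq R$ for \emph{every} edge. These two global constraints are in genuine tension: $\widetilde\Deg(v)\leq D_1$ forces $\widetilde m(v)\geq \sum_{x\more v}m(x)/D_1$, and as vertex weights are inflated the edgewise Forman curvature tends to $\sum_{z\more x}\frac{m(z)}{m(x)}\bigl(2-\ell(z)\bigr)$, where $\ell(z)\geq 3$ is the number of edges of the 2-cell $z$ — a nonpositive quantity — so the curvature hypothesis cannot in general survive the amount of inflation that the degree constraint demands. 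You notice the non-monotonicity yourself but leave the construction as a hope, with a vague fallback; as written this is a gap, not a proof.

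The paper's resolution is more local and uses a different set of weights. Fix a geodesic $v_0\sim\cdots\sim v_n$ realizing the diameter and modify only the weights of the \emph{edges} (1-cells) incident to $v_0$ or $v_n$, other than the two path edges $(v_0,v_1)$ and $(v_{n-1},v_n)$; the vertex weights are untouched. Because a modified edge $x\more v_0$, $x\neq(v_0,v_1)$, shares no vertex with an interior path edge (the path is shortest) and its weight does not enter the terms $Hy(v_i,v_{i+1})$, the Forman curvature of the interior path edges is unchanged, while the specific choice of $\widetilde m$ makes $\widetilde F(v_0,v_1)\geq F(v_0,v_1)$ and $\widetilde F(v_{n-1},v_n)\geq F(v_{n-1},v_n)$. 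Corollary~\ref{cor:KequalsF} then gives $\widetilde\kappa\geq R$ on the path edges, hence $\widetilde\kappa(v_0,v_n)\geq R$, and one applies the \emph{pairwise} bound $d(v_0,v_n)\leq(\widetilde\Deg(v_0)+\widetilde\Deg(v_n))/R$, which requires degree control only at the two endpoints; there the chosen edge weights give $\widetilde\Deg(v_0)\leq\Deg(v_0,v_1)\leq D_1$ and likewise at $v_n$. The missing idea in your proposal is precisely this localization: you only need to fix up two vertices and preserve curvature along one geodesic, and the degrees of freedom to do that live in the edge weights at those two vertices, not in a global vertex reweighting.
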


\begin{proof}
We first observe that by Corollary~\ref{cor:KequalsF} and by the diameter bound for Ollivier curvature,
\[
\diam \leq \frac{2D_0}{\inf_{x\in X_1} \kappa(x)} \leq \frac{2D_0}{R}.
\]

We finally prove $\diam \leq \frac {2(D_1)} R$.
Let $v_0 \sim \ldots \sim v_n$ be a shortest path and assume $d(v_0,v_n) = \diam$.
We modify the weights of $1$-cells containing $v_0$ or $v_n$ via
\[
\widetilde m(x) := \begin{cases} m(v_0)\sum_{\substack{z \more x, \\z \more (v_0,v_1)}} \frac{m(z)}{m(x)} &: x \more v_0, \; x \neq (v_0,v_1), \\
m(v_n)\sum_{\substack{z \more x, \\z \more (v_{n-1},v_n)}} \frac{m(z)}{m(x)} &: x \more v_n, \; x \neq (v_{n-1},v_n), \\
m(x) &:\mbox{else}.
\end{cases}
\]
In case $\widetilde m(x)=0$, we set $\widetilde m(x)=\eps$ and let $\eps \to 0$ later.
Using the tilde will indicate that the quantity refers to $(X,\de,\widetilde m)$.
By construction, we have $\widetilde F(v_0,v_1) \geq F(v_0,v_1)$ and  $\widetilde F(v_{n-1},v_{n}) \geq F(v_{n-1},v_n)$. As $v_1$ is the only neighbor of $v_0$ on the shortest path $(v_i)_i$, we have $\widetilde F(v_i,v_{i-1}) = F(v_i,v_i-1)$ for $i=2,\ldots,n-1$.

Applying Corollary~\ref{cor:KequalsF} to $(X,\de,\widetilde m)$, we obtain $\widetilde \kappa(v_i,v_{i+1}) \geq R$ for all $i$ implying $\kappa(v_0,v_n) \geq R$ as $(v_i)_i$ is a shortest path.
The distance bound for Ollivier curvature gives
\[
\diam \leq \frac{\widetilde \Deg(v_0) + \widetilde \Deg(v_n)}R.
\]
On the other hand,
\[
\widetilde \Deg(v_0) = \sum_{x \more v_0}\frac{\widetilde m(x)}{m(v_0)} = \frac{m(v_0,v_1)}{m(v_0)} + \sum_{z \more (v_0,v_1)} \frac{m(z)}{m(v_0,v_1)} \leq \Deg(v_0,v_1) \leq D_1.
\]
Similarly, $\widetilde \Deg(v_n) \leq D_1$. Plugging in finishes the proof.
\end{proof}

\subsection{Examples}\label{sec:Example}

%\begin{example}
We consider the one-dimensional cell complex $G$ with unit weights given by $X_0 = \{1,\ldots,6\}$ and $v\sim w$ iff $|v-w| \in \{1,2\}$.
The Ollivier curvature is displayed in the figure below. We remind that we use the non-normalized Ollivier curvature meaning that the curvature can exceed two.

\includegraphics{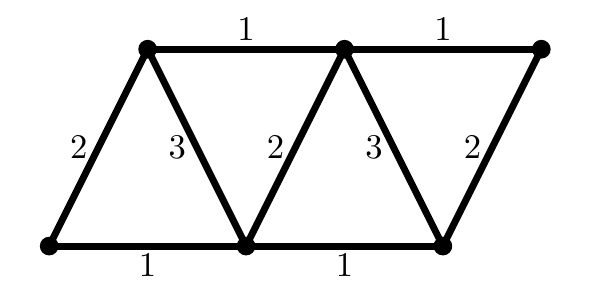}

We now compute the minimal Forman curvature maximized over the cycle weights. For a Mathematica implementation, see the appendix.
We write
$z_1$ and $z_1'$ for the outer triangles, $z_2$ and $z_2'$ for the inner triangles, $z_3$ and $z_3'$ for the outer 4-cycles, $z_4$ for the inner 4-cycle, and $z_5$ and $z_5'$ for the 5-cycles.
By symmetry and concavity of the Forman curvature in the cycle weights, we can assume $m(z_k)=m(z_k')$.
%We write $m_k:=m(z_k)$.
Setting 
$m(z_1)=m(z_3)=m(z_4)=\frac 2 3$ and  
$m(z_2)=\frac 1 3$ and 
$m(z_5)=0$,
and labeling the edges $(x_n)_{n=1}^9$ by
\[
x_n := \left(\left\lceil\frac n 2 \right \rceil , \left\lceil\frac {n+3} 2 \right \rceil  \right),
\]
we obtain
\[
H=\frac 1 3 
\begin{pmatrix}
10&-1&-1&-1&-2&0&0&0&0\\-1&10&1&-2&-1&-3&0&0&0\\-1&1&11&0&-2&-1&-2&0&0\\-1&-2&0&11&0&-2&-1&-3&0\\-2&-1&-2&0&12&0&-2&-1&-2\\0&-3&-1&-2&0&11&0&-2&-1\\0&0&-2&-1&-2&0&11&1&-1\\0&0&0&-3&-1&-2&1&10&-1\\0&0&0&0&-2&-1&-1&-1&10
\end{pmatrix}
\]
and thus $F \geq \frac 2 3$ with 
$
F(1,3)=F(2,4)=F(3,4)=F(3,5)=F(4,6) = \frac 2 3
$ 
and $F(1,2)=F(5,6)=\frac 5 3$ and $F(2,3) = F(4,5) = \frac 4 3$.
We now employ Theorem~\ref{thm:maxminForman} to show that $\frac 2 3$ is the optimal Forman curvature lower bound.
We set
\[
\opt := \frac 1  3 \begin{pmatrix}
0&0&0&0&0&0&0&0&0\\0&0&0&0&0&0&0&0&0\\0&0&0&0&0&0&0&0&0\\1&1&1&1&1&1&1&1&1\\1&1&1&1&1&1&1&1&1\\1&1&1&1&1&1&1&1&1\\0&0&0&0&0&0&0&0&0\\0&0&0&0&0&0&0&0&0\\0&0&0&0&0&0&0&0&0
\end{pmatrix}.
\]
Conditions $(a)$ and $(c)$ from Theorem~\ref{thm:maxminForman} are trivial to check.
For condition $(b)$, we can write $\de : C(X_1) \to C(X_2)$ as
\[
\de = \begin{pmatrix}
0&0&0&0&0&0&1&-1&1\\0&0&0&0&1&-1&1&0&0\\0&0&-1&1&-1&0&0&0&0\\1&-1&1&0&0&0&0&0&0\\0&0&0&0&1&-1&0&1&-1\\0&0&-1&1&0&-1&1&0&0\\1&-1&0&1&-1&0&0&0&0\\0&0&-1&1&0&-1&0&1&-1\\1&-1&0&1&0&-1&1&0&0
\end{pmatrix}
\]
giving $\de \opt \de^* = 0$. 
Moreover, as on $C(X_1)$, 
\[
\de \de^* = \begin{pmatrix}
2&1&-1&-1&0&0&0&0&0\\1&2&1&0&-1&-1&0&0&0\\-1&1&2&1&-1&-1&0&0&0\\-1&0&1&2&1&0&-1&-1&0\\0&-1&-1&1&2&1&-1&-1&0\\0&-1&-1&0&1&2&1&0&-1\\0&0&0&-1&-1&1&2&1&-1\\0&0&0&-1&-1&0&1&2&1\\0&0&0&0&0&-1&-1&1&2
\end{pmatrix},
\]
we get $\Tr(\de \de^* \opt) = \frac 2 3$.
This shows
\[
\max_K \min_{x \in X_1} F_K(x) = \frac 2 3
\]
where the maximum is taken over all cell complexes $K$ having $G$ as $1$-skeleton.
This demonstrates that the optimal lower Forman curvature bound can be strictly smaller than the minimal Ollivier curvature, and non-integer valued even in the case $m=1$.

\section{Varying the distance  -- Summarizing the results}\label{sec:VaryDistance}
For defining Ollivier curvature, one has the freedom of choosing a distance on $X_0$. For being compatible with Forman curvature, we restrict ourselves to path distances compatible with the edges.
Let $K=(X,\de,m)$ be a cell complex. We recall $\de:C(X_\pcell) \to C(X_{\pcell +1})$ is the (unweighted) coboundary operator, and $m$ induces a scalar product on $C(X)$.
Let $\omega: X \to (0,\infty)$.
We define the path distance $d_\omega:X_0 \to [0,\infty]$ via
\[
d_\omega(v,w) := \inf \left\{\sum_{k=1}^n \omega(v_k,v_{k-1}) : v=v_0\sim \ldots,\sim v_n = w \right\}.
\]
We will always assume that $\omega$ is \emph{non-degenerate}, i.e., every edge $x=(v,w) \in X_1$ is the unique shortest path between its vertices.
The Ollivier curvature $\kappa_\omega$ of $x \in X$ with respect to $\omega$ is given by
\[
\omega(x)\kappa_\omega(x) := \inf_{\substack{\de f(x) = \omega(x)\\|\de f| \leq \omega}} \de \de^*\de f.
\]
If $x \in X_1$, then $\kappa_\omega(x)$ is the usual Ollivier curvature of the edge $x$ with respect to $d_\omega$, compare \cite[Theorem~2.1]{munch2017ollivier}.
For $v,w \in X_0$, we have $\kappa_\omega(v,w) = \infty$ iff $d_\omega(v,w) <\omega(v,w)$ for $v,w \in X_0$,
as the infimum is taken over the empty set.
Therefore, non-degenerated $\omega$ means finite Ollivier curvature on $X_1$.
To make Forman curvature compatible with Ollivier curvature with repect to $\omega$, we define for $x \in X$,
\[
F_\omega (x) :=  Hx(x) - \sum_{y \neq x} \frac{\omega(y)}{\omega(x)}|Hy(x)|.
\]
We recall $H=\de^*\de + \de \de^*$. For $x \in X_1$, the weighted Forman curvature $F_\omega$ reads as
\[
F_\omega (x) = \sum_{v \less x}\frac{m(x)}{m(v)} + \sum_{z \more x} \frac{m(z)}{m(x)} - \sum_{y\neq x} \frac {\omega(y)}{\omega(x)}\left|   \sum_{v \less x,y} \frac{m(y)}{m(v)} - \sum_{z\more x,y}  \frac{m(z)}{m(x)}  \right|.
\]

We notice that the constraint $|\de f|\leq \omega/\omega(x)$ in the Ollivier curvature becomes a coefficient $\omega(y)/\omega(x)$ in the Forman curvature, as one would expect from dual linear programs.

In terms of diagonally dominant operators and the diagonal split off $D(H)$ from \eqref{eq:diagPlusSignedLaplace}, we can write \[\omega F_\omega = D(\omega H)\] 
where $F_\omega$ is interpreted as multiplication operator.
In the following, we  state our results about Forman and Ollivier curvature with arbitrary non-degenerated $\omega$.
As they are straight forward generalizations of the case $\omega=1$, we will omit the proofs and refer to the corresponding results with $\omega=1$.

%We now state our results for $\kappa_\omega$ and $F_\omega$.

\subsection{Semigroup estimates}
We give semigroup estimates for $\kappa_\omega$ and $F_\omega$.
To do so, we recall the supremum norm relative to $\omega$, given by
\[
\|f\|_{\omega,\infty} = \left\|\frac f \omega \right\|_\infty,
\]
and the $\pnorm$-norm for $\pnorm \in [1,\infty)$, given by
\[
\|f\|_{\omega,\pnorm} = \left\| \frac{f}{\omega^{1-2/\pnorm}} \right\|_\pnorm.
\]

\begin{theorem}[see Theorem~\ref{thm:HodgeSemigroup} and Theorem~\ref{thm:weightedDiagonalDominance}]\label{thm:HodgeSemigroupOmega}
Let $(X,\de,m)$ be a cell complex and let $\omega:X \to (0,\infty)$ be non-degenerate. let $\pcell \in \N$ and $\Ri \in \R$.
The following statements are equivalent:

\begin{enumerate}[(i)]
\item $F_\omega(x) \geq \Ri$ for all $x \in X_\pcell$,
\item $\left\|e^{-tH} f \right\|_{\omega,\infty} \leq e^{-\Ri t} \left\| f  \right\|_{\omega,\infty}$ for all $f \in C(X_\pcell)$.
\item $\left\|e^{-tH} f \right\|_{\omega,\pnorm} \leq e^{-\Ri t} \left\| f  \right\|_{\omega,\pnorm}$ for all $f \in C(X_\pcell)$ and all $p \in [1,\infty]$,
\item $H_\pcell-R$ is diagonally dominant with respect to $\omega$.
\end{enumerate}
\end{theorem}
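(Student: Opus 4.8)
The plan is to deduce the four-fold equivalence from the weighted diagonal dominance characterization Theorem~\ref{thm:weightedDiagonalDominance}, applied to the shifted operator $H_\pcell - \Ri$, exactly mirroring the way Theorem~\ref{thm:HodgeSemigroup} reduces the case $\omega \equiv 1$ to Theorem~\ref{thm:semigroupPUnweighted}. The entire content is the observation that $F_\omega$ is the $\omega$-weighted analogue of the diagonal slack $D(H)$: the inequality $F_\omega(x) \geq \Ri$ is precisely the statement that $H_\pcell - \Ri$ is diagonally dominant with respect to $\omega$, which is the identity $\omega F_\omega = D(\omega H)$ recorded above, specialized to the shift.

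First I would establish (i) $\Leftrightarrow$ (iv). Since $H$ preserves cell dimension, $H_\pcell := H|_{C(X_\pcell)}$ is selfadjoint on $\ell_2(X_\pcell,m)$, and so is $H_\pcell - \Ri$; its off-diagonal entries agree with those of $H_\pcell$ while its diagonal entry at $x$ is $Hx(x) - \Ri$. Substituting this into the definition of diagonal dominance with respect to $\omega$ gives, for each $x \in X_\pcell$,
\[
\omega(x)\bigl(Hx(x) - \Ri\bigr) \geq \sum_{y \neq x} \omega(y)|Hy(x)| \iff Hx(x) - \sum_{y \neq x}\frac{\omega(y)}{\omega(x)}|Hy(x)| \geq \Ri,
\]
and the left-hand side of the last inequality is exactly $F_\omega(x)$. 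Thus (i) $\Leftrightarrow$ (iv) holds with no analysis.

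Next I would feed $H_\pcell - \Ri$ into Theorem~\ref{thm:weightedDiagonalDominance} with the weight $\omega|_{X_\pcell}$. That theorem equates diagonal dominance of $H_\pcell - \Ri$ with respect to $\omega$ to the $\left\|\cdot\right\|_{\omega,\infty}$-contractivity and the $\left\|\cdot\right\|_{\omega,\pnorm}$-contractivity (all $\pnorm \in [1,\infty]$) of the semigroup $e^{-t(H_\pcell - \Ri)}$. The only remaining step is to absorb the shift: since $\Ri$ commutes with $H_\pcell$, we have $e^{-t(H_\pcell-\Ri)} = e^{\Ri t}\,e^{-tH_\pcell}$, and each of the norms $\left\|\cdot\right\|_{\omega,\infty}$, $\left\|\cdot\right\|_{\omega,\pnorm}$ is positively homogeneous, so the contractivity $\left\|e^{-t(H_\pcell-\Ri)}f\right\| \leq \left\|f\right\|$ reads literally as the decay estimate $\left\|e^{-tH_\pcell}f\right\| \leq e^{-\Ri t}\left\|f\right\|$. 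Because $e^{-tH}f = e^{-tH_\pcell}f$ for $f \in C(X_\pcell)$, these are precisely statements (ii) and (iii), and chaining the equivalences completes the proof.

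I expect no serious obstacle; the argument is bookkeeping once one sees that $F_\omega$ is the $\omega$-weighted diagonal slack and that shifting by $\Ri$ together with rescaling the semigroup converts contractivity into $e^{-\Ri t}$-decay. The only points needing a line of care are the selfadjointness of the restriction $H_\pcell$ (immediate from degree preservation), the homogeneity of the weighted norms under the scalar factor $e^{\Ri t}$, and the continuity in $\pnorm$ at $\pnorm = \infty$, which is already handled inside Theorem~\ref{thm:weightedDiagonalDominance}. I would also remark that the non-degeneracy hypothesis on $\omega$ plays no role in this particular equivalence — it is needed only to keep $\kappa_\omega$ finite — so it could be dropped here.
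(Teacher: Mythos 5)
Your proof is correct and is exactly the argument the paper intends: the paper omits the proof of Theorem~\ref{thm:HodgeSemigroupOmega} as a ``straightforward generalization,'' pointing to Theorem~\ref{thm:weightedDiagonalDominance} applied to the shifted operator $H_\pcell-\Ri$ together with the identity $e^{-t(H_\pcell-\Ri)}=e^{\Ri t}e^{-tH_\pcell}$, which is precisely what you carry out (reading the weighted diagonal dominance condition with absolute values on the off-diagonal terms, as the $\omega\equiv 1$ case forces). Your closing remark that non-degeneracy of $\omega$ is not actually used here is also accurate.
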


The next theorem is a generalization of the Lipschitz contraction characterization of Ollivier curvature, see \cite{munch2017ollivier,veysseire2012coarse}. In general, $\|\de f\|_{\infty,\omega}$ cannot be expressed as Lipschitz constant of $f$, making the following theorem conceptually interesting.

\begin{theorem}\label{thm:OllivierSemigroupOmega}
Let $(X,\de,m)$ be a cell complex and $\omega:X \to (0,\infty)$ non-degenerate, let $\pcell \in \N$ and $\Ri \in \R$.
The following statements are equivalent:

\begin{enumerate}[(i)]
\item $\kappa_\omega(x) \geq \Ri$ for all $x \in X_\pcell$,
\item $\left\| \de e^{-t\de^*\de} f \right\|_{\omega,\infty} \leq e^{-\Ri t} \left\|\de f \right\|_{\omega,\infty}$ for all $f \in C(X_{\pcell-1})$.
\end{enumerate}
\end{theorem}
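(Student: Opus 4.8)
The plan is to prove this semigroup characterization of Ollivier curvature by reducing it to the semigroup characterization of diagonal dominance already established in Theorem~\ref{thm:weightedDiagonalDominance}. The key observation is that the statement concerns the operator $H_{\pcell-1} = \de^*\de$ acting on $C(X_{\pcell-1})$, but the quantity we wish to contract, $\de f$, lives in $C(X_\pcell)$. So the first step is to transfer the contraction estimate through the coboundary operator. I would use the intertwining relation $\de\, e^{-t\de^*\de} = e^{-t\de\de^*}\,\de$, which follows by expanding the exponential series and repeatedly using $\de(\de^*\de) = (\de\de^*)\de$. Writing $g := \de f \in \im(\de) \subseteq C(X_\pcell)$, statement (ii) becomes $\|e^{-t\de\de^*} g\|_{\omega,\infty} \leq e^{-\Ri t}\|g\|_{\omega,\infty}$ for all $g$ in the range of $\de$.

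The second step is to relate the operator $\de\de^*$ appearing here to the full Hodge Laplacian $H_\pcell = \de\de^* + \de^*\de$ on $C(X_\pcell)$, so that I can invoke Theorem~\ref{thm:HodgeSemigroupOmega} and hence the Forman-type diagonal dominance of $H_\pcell - \Ri$ with respect to $\omega$. On the range of $\de$ the two operators agree, since $\de^*\de$ kills $\im(\de)$ up to the orthogonal-complement structure; more precisely $\im(\de)$ is invariant under $\de\de^*$ and $e^{-t\de\de^*}$ preserves it, so contraction on $\im(\de)$ in the $\|\cdot\|_{\omega,\infty}$ norm is what must be matched against the diagonal-dominance condition. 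I would then connect $\kappa_\omega(x) \geq \Ri$ to this contraction directly via the one-form characterization of Ollivier curvature (Theorem~\ref{thm:OllivierVectorField}) together with Theorem~\ref{thm:dualForman}, which already identify the relevant infimum over one-forms $h$ with $\de x\cdot\de h \le 0$; the restriction to $h = \de f$ is exactly the constraint $\de h = 0$, placing $h$ in $\im(\de)$.

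The cleanest route is therefore: show (i) is equivalent to the statement that $\de\de^* - \Ri$ is diagonally dominant with respect to $\omega$ when restricted to $\im(\de)$, and then apply the semigroup half of Theorem~\ref{thm:weightedDiagonalDominance} on that invariant subspace to obtain the $\|\cdot\|_{\omega,\infty}$ contraction. The reduction (i) $\Leftrightarrow$ diagonal dominance of $\de\de^*-\Ri$ on $\im(\de)$ is essentially the definition of $\kappa_\omega$ combined with the dual program: $\omega(x)\kappa_\omega(x)$ is an infimum over $f$ of $\de\de^*\de f(x)$ subject to the normalization and Lipschitz constraints on $\de f$, which translates into a Forman-type lower bound on $\de\de^*$ restricted to the range of $\de$, evaluated with the weight $\omega$.

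The main obstacle I anticipate is handling the subspace $\im(\de)$ carefully in the supremum norm. The semigroup $e^{-t\de\de^*}$ preserves $\im(\de)$, but the $\|\cdot\|_{\omega,\infty}$ contraction furnished by diagonal dominance is naturally a statement about the full operator on all of $C(X_\pcell)$, whereas Ollivier curvature only constrains its behaviour on one-forms of the form $\de f$. I must ensure that the infimum defining $\kappa_\omega$ really corresponds to the diagonal-dominance inequality $\omega(x) H x(x) \geq \Ri\,\omega(x) + \sum_{y\neq x}\omega(y)|Hy(x)|$ tested against the worst-case sign-and-Lipschitz pattern of $\de f$ near $x$, and that the non-degeneracy of $\omega$ guarantees these extremal one-forms are attainable within $\im(\de)$ rather than only in the larger constraint set $\{\de x\cdot\de h\le 0\}$. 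This is precisely where Theorem~\ref{thm:OllivierVectorField} is needed, since it equates the infimum over $h\in\im(\de)$ (i.e.\ $\de h=0$) with the relaxed infimum over $\de x\cdot\de h\le 0$; without it the two norms would only give an inequality rather than the asserted equivalence.
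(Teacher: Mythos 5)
Your opening observation --- the intertwining $\de\, e^{-t\de^*\de}=e^{-t\de\de^*}\de$, so that (ii) is a contraction statement for $e^{-t\de\de^*}$ on the invariant subspace $\im(\de)\subseteq C(X_\pcell)$ --- is correct and consistent with how the paper sets things up. The gap is in your central step, where you propose to ``apply the semigroup half of Theorem~\ref{thm:weightedDiagonalDominance} on that invariant subspace.'' Diagonal dominance with respect to $\omega$ is a condition on the individual matrix entries $Hy(x)$, and Theorem~\ref{thm:weightedDiagonalDominance} equates it with contraction of $e^{-tH}$ on \emph{all} of $C(X_\pcell)$; there is no version of that theorem for a proper invariant subspace, and ``diagonally dominant restricted to $\im(\de)$'' is not a defined notion. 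If the reduction you describe did work, it would show that $\kappa_\omega(x)\geq \Ri$ for all $x\in X_\pcell$ is equivalent to diagonal dominance of $H_\pcell-\Ri$ with respect to $\omega$, i.e.\ to $F_\omega\geq \Ri$ --- but the paper shows $F\leq\kappa$ with generally \emph{strict} inequality before optimizing the $2$-cell weights (Corollary~\ref{cor:FleqKappa} and the example of Section~\ref{sec:Example}). Your fallback, invoking Theorem~\ref{thm:OllivierVectorField}, does not repair this: that theorem carries the extra hypothesis that all short(cutting) cycles are $2$-cells, which is not assumed in the present statement, it is formulated only for $x\in X_1$ whereas here $x$ ranges over $X_\pcell$ for arbitrary $\pcell$, and in any case it compares two variational problems for $\kappa$ rather than supplying a subspace diagonal-dominance criterion.

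The paper's actual proof bypasses all of this with a direct maximum-principle computation. With $h:=e^{-t\de^*\de}f$ one has
\[
\partial_t^+\log\left\|\de e^{-t\de^*\de}f\right\|_{\omega,\infty}
=-\min_{\de h(x)/\omega(x)=\|\de h\|_{\omega,\infty}}\frac{\de\de^*\de h(x)}{\omega(x)\,\|\de h\|_{\omega,\infty}}
\leq -\min_{x}\kappa_\omega(x),
\]
because at any point $x$ where the weighted maximum is attained, $\de h/\|\de h\|_{\omega,\infty}$ is admissible in the infimum defining $\omega(x)\kappa_\omega(x)$; integrating gives $(i)\Rightarrow(ii)$. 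For $(ii)\Rightarrow(i)$ one differentiates (ii) at $t=0$ with $f$ chosen (near-)optimal in the definition of $\kappa_\omega(x)$, where the inequality above becomes an equality. If you want to salvage your outline, replace the appeal to Theorem~\ref{thm:weightedDiagonalDominance} by this derivative-of-the-sup-norm argument on $\im(\de)$; the intertwining relation you wrote down is exactly what gives $\partial_t\,\de h=-\de\de^*\de h$ and makes the computation close.
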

\begin{proof}
We notice that with $h:= e^{-t\de^*\de} f$,
\[
\partial_t^+ \log  \left\| \de e^{-t\de^*\de} f \right\|_{\omega,\infty} =  -\min_{\de h(x)=\|\de h\|_{\infty,\omega}}  \frac{\de \de^*\de h(x)}{\|\de h\|_{\omega,\infty}}  \leq -\min_{x} \kappa_\omega(x)
\]
implying $(i) \Rightarrow (ii)$ by integrating.
On the other hand, equality can be achieved for $t=0$ implying $(ii) \Rightarrow (i)$. This finishes the proof.
\end{proof}

The theorem can be seen as a translation between maximum principle and contraction properties of a semigroup, similar to the semigroup estimate for the Forman curvature.

\subsection{Forman and Ollivier curvature as dual linear programs}

In order to translate between Forman and Ollivier curvature, we give the dual linear program to maximizing the Forman curvature on $X_\pcell$ over cell weights on $X_{\pcell + 1}$. 
We recall $K' \leq_\pcell K$ if
$X_\kcell = X'_\kcell$ for $\kcell\leq \pcell$, and 
$X_\kcell' \subseteq X_\kcell$ for $\kcell >\pcell$, and
$m(x)=m'(x)$ whenever $\dim(x) \leq \pcell$, and
 $\de'v(x)=\de v(x)$ for all $v,x \in X'$.

\begin{theorem}[see Theorem~\ref{thm:dualForman}]\label{thm:dualFormanOmega}
Let $K=(X,\de,m)$ be a cell complex and $\omega:X \to (0,\infty)$ non-degenerate, and $x \in X_\pcell$ for some $\pcell \in \N_0$.
Then,
\[
\max_{K' \leq_\pcell K}\omega(x)F'_\omega(x) = \min_{\substack{h(x)=\omega(x)\\|h|\leq \omega\\\de h \cdot \de x \leq 0}} \de \de^*h(x).
\]

\end{theorem}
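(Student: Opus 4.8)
The plan is to generalize the linear-programming duality argument used for Theorem~\ref{thm:dualForman}, threading the weight $\omega$ through the program as coefficients on the absolute-value terms rather than trying to scale it away. As in that proof I would first reduce, without loss of generality, to $m(y)=1$ for every $y$ with $\dim(y)>\pcell$, and write $n:=m'|_{X_{\pcell+1}}\geq 0$ for the free $(\pcell+1)$-cell weights, so that on $C(X_\pcell)$ one has $H'_\pcell=\de\de^*+\de^* n\de$ exactly as before (the computation $(\de')^*z(y)=\de^* m'z(y)$ carries over verbatim, and $H_\pcell$ depends on $m'$ only through $n$). The quantity to maximize is then
\[
\max_{K'\leq_\pcell K}\omega(x)F'_\omega(x)=\max_{n\geq 0}\left(\omega(x)(\de\de^*x(x)+\de^* n\de x(x))-\sum_{y\neq x}\omega(y)\left|\de\de^*y(x)+\de^* n\de y(x)\right|\right),
\]
which I would linearize by a slack variable $g(y)$ for each $y\neq x$ encoding $\omega(y)|H'_\pcell y(x)|$. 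The only departure from the unweighted primal is that the two inequalities bounding $g(y)$ are scaled by $\omega(y)$.

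Next I would pass to the dual, attaching nonnegative multipliers $n(z)$ and nonpositive multipliers $c_-(y),c_+(y)$ to the constraints. Since $g(y)$ is free, enters the objective with coefficient $-1$ and each of its two constraints with coefficient $+1$, the dual equality it produces is still $c_-(y)+c_+(y)=-1$. The weight $\omega$ instead surfaces in the objective and in the $n(z)$-column, and the dual objective becomes
\[
\omega(x)\de\de^*x(x)+\sum_{y\neq x}\omega(y)\de\de^*y(x)\,(c_-(y)-c_+(y)).
\]
I would then set $h(x):=\omega(x)$ and $h(y):=\omega(y)(c_-(y)-c_+(y))$ for $y\neq x$, so that the dual objective is exactly $\de\de^*h(x)$, while $c_-(y)+c_+(y)=-1$ with $c_-,c_+\leq 0$ translates precisely into $|h(y)|\leq\omega(y)$, together with the normalization $h(x)=\omega(x)$.

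The step I expect to be the crux is verifying that the dual constraint coming from $n(z)\geq 0$ still collapses to $\de x\cdot\de h\leq 0$ in the presence of the extra weights. After clearing the factor $m(x)$ this constraint reads $\de x(z)\sum_{y\neq x}\omega(y)\de y(z)(c_+(y)-c_-(y))\geq\omega(x)(\de x(z))^2$. Rewriting the left-hand sum as $-\sum_{y\neq x}\de y(z)h(y)=\omega(x)\de x(z)-\de h(z)$, where the identity $h(x)=\omega(x)$ is used to restore the missing $y=x$ term, the $\omega(x)(\de x(z))^2$ contributions cancel and one is left with $\de x(z)\de h(z)\leq 0$ for every $z\in X_{\pcell+1}$. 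Strong LP duality, exactly as in Theorem~\ref{thm:dualForman}, then equates the two optimal values and yields the claimed identity.

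I would emphasize in the write-up why the weight must be carried through the program rather than removed by a substitution: the naive change of variables $h=\omega\tilde h$ does \emph{not} reduce the statement to Theorem~\ref{thm:dualForman}, because $\de$ is a difference operator and $\de(\omega\tilde h)\neq\omega\,\de\tilde h$, so the constraint $\de x\cdot\de h\leq 0$ does not transform cleanly. This is precisely why the cancellation of the $\omega(x)$ terms in the $n(z)$-constraint, supplied by the boundary value $h(x)=\omega(x)$, is the essential point of the argument.
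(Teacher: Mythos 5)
Your proposal is correct and follows exactly the route the paper intends: the paper omits the proof of Theorem~\ref{thm:dualFormanOmega}, declaring it a straightforward generalization of Theorem~\ref{thm:dualForman}, and you supply precisely that generalization by rerunning the LP duality with $\omega(y)$ attached to the absolute-value terms, with the key cancellation $\de x(z)\bigl(\omega(x)\de x(z)-\de h(z)\bigr)\geq\omega(x)(\de x(z))^2$ reducing to $\de x\cdot\de h\leq 0$ checked correctly. Your closing remark on why the naive substitution $h=\omega\tilde h$ fails is a worthwhile addition, since it justifies why the duality must be redone rather than transported.
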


We now give the dual linear program to maximizing the minimal Forman curvature of $\pcell$-cells when maximizing over the cell weights on $X_{\pcell + 1}$. We recall $\Tr (\opt) = \sum_{x \in X_\pcell} \opt x(x)$ for linear $\opt :C(X_\pcell) \to C(X_\pcell)$.

\begin{theorem}[see Theorem~\ref{thm:maxminForman}] \label{thm:maxminFormanOmega}
Let $K=(X,\delta,m)$ be a cell complex and $k \in \N_0$. 
Let $\omega:X \to (0,\infty)$ be non-degenerate.
Then,
\[
\max_{K' \leq_k K} \min_{x \in X_k}F_\omega'(x) = \min_\opt \Tr(\de \de^* \opt)
\]
where the minimum is taken over all linear $\opt \in C(X_k) \to C(X_k)$ satisfying
\begin{enumerate}[(a)]
\item $\opt f(x) \geq 0$ whenever $2\langle x, \omega f\rangle \geq \|f\|_{\omega,1}$,
\item $\de \opt \de^* z(z) \leq 0$ for all $ z \in X_{k+1}$,
\item $\Tr(\opt) =1$.
\end{enumerate}

\end{theorem}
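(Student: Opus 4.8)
The plan is to deduce the statement from the unweighted Theorem~\ref{thm:maxminForman} by conjugating with $\omega$, viewed as a multiplication operator, exactly as the substitution $H\mapsto\frac1\omega H\omega$ was used to pass from Theorem~\ref{thm:semigroupPUnweighted} to Theorem~\ref{thm:weightedDiagonalDominance}. Setting $\widetilde{H}:=\frac1\omega H_k\omega$ on $C(X_k)$, a direct computation gives $\widetilde{H}y(x)=\frac{\omega(y)}{\omega(x)}Hy(x)$, so that the ordinary diagonal split-off of $\widetilde{H}$ reproduces the weighted curvature: $D(\widetilde{H})x(x)=Hx(x)-\sum_{y\neq x}\frac{\omega(y)}{\omega(x)}|Hy(x)|=F_\omega(x)$ (compare $\omega F_\omega=D(\omega H)$ in Section~\ref{sec:VaryDistance}). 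Since $\widetilde{H}$ is selfadjoint on $\ell_2(X_k,m\omega^2)$, the weighted max--min problem for $(H_k,m,\omega)$ becomes the \emph{ordinary} max--min problem of Theorem~\ref{thm:maxminForman} for $\widetilde{H}$ on the weight $m\omega^2$.

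First I would check that conjugation is compatible with the optimization over the $(k+1)$-cell weights. Writing, as in the proof of Theorem~\ref{thm:dualForman}, $H_k'=\de\de^*+\de^* n\de$ with $0\le n=m'|_{X_{k+1}}$, conjugation yields
\[
\tfrac1\omega H_k'\omega=\bigl(\tfrac1\omega\de\bigr)\bigl(\de^*\omega\bigr)+\bigl(\tfrac1\omega\de^*\bigr)\,n\,\bigl(\de\,\omega\bigr),
\]
which is again of the structural form ``down-Laplacian plus $\de^* n\de$'' for the twisted coboundaries $\de\,\omega$ (up) and $\tfrac1\omega\de$ (down), whose adjoints with respect to $m\omega^2$ on $X_k$ are $\tfrac1\omega\de^*$ and $\de^*\omega$. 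Only $\omega|_{X_k}$ enters here, consistent with $F_\omega$ on $k$-cells being independent of the higher weights. Thus the family $\{H_k':K'\le_k K\}$ is carried onto the twisted family with the same free parameter $n\ge 0$, and $\max_{K'\le_k K}\min_x F_\omega'(x)$ equals the unweighted max--min for the twisted data.

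Next I would invoke Theorem~\ref{thm:maxminForman} for the twisted data. Its proof is pure linear-programming duality for operators of the form $\de\de^*+\de^* n\de$ and never uses that $\de$ is $\{-1,0,1\}$-valued, so it applies verbatim with $\de$ replaced by the twisted coboundaries and $m$ by $m\omega^2$, producing an optimal $\widetilde{\opt}$. I then set $\opt:=\omega\,\widetilde{\opt}\,\tfrac1\omega$, which is a bijection between the dual feasible sets. By cyclicity of the trace the objective $\Tr\bigl(\tfrac1\omega\de\de^*\omega\,\widetilde{\opt}\bigr)$ becomes $\Tr(\de\de^*\opt)$, and $\Tr\widetilde{\opt}=\Tr\opt$ gives condition~(c); the identity $(\de\,\omega)\widetilde{\opt}(\tfrac1\omega\de^*)=\de\opt\de^*$ turns the twisted form of~(b) into $\de\opt\de^* z(z)\le 0$. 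For~(a), the substitution $g=\omega f$ turns $\widetilde{\opt}f(x)\ge 0$ into $\opt g(x)\ge 0$, while its trigger --- the condition $2\langle x,f\rangle\ge\|f\|_1$ taken in the weight $m\omega^2$ --- computes to $2\langle x,\omega g\rangle\ge\|g\|_{\omega,1}$, since $\langle x,f\rangle_{m\omega^2}=\langle x,\omega g\rangle$ and $\sum_z m(z)\omega(z)^2|f(z)|=\|g\|_{\omega,1}$; this is precisely condition~(a).

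The main obstacle is the final bookkeeping: one must track three weightings simultaneously --- the original $m$, the conjugated $m\omega^2$, and the $\omega$-relative norms --- and confirm that conjugation by $\omega$ intertwines $\widetilde{\opt}$ and $\opt$ in exactly the way that sends the $m\omega^2$-inner-product form of~(a) to the $\omega$-norm form. The only genuinely non-routine point beyond this reweighting is the observation that the duality of Theorem~\ref{thm:maxminForman} is insensitive to replacing the combinatorial coboundary by the twisted coboundary $\de\,\omega$; alternatively, one could bypass conjugation entirely and re-run the linear program of Theorem~\ref{thm:maxminForman} with the factors $\omega(y)/\omega(x)$ inserted into the curvature constraints and tracked through to the dual, arriving at the same conditions.
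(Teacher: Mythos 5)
Your argument is correct and matches the paper's intent: the paper omits the proof of Theorem~\ref{thm:maxminFormanOmega} entirely, declaring it a ``straightforward generalization'' of Theorem~\ref{thm:maxminForman}, and your conjugation $\widetilde{H}=\tfrac1\omega H\omega$ acting on $\ell_2(X_k,m\omega^2)$ is precisely the mechanism the paper itself introduces in the proof of Theorem~\ref{thm:weightedDiagonalDominance}, here carried through the linear-programming duality with the bookkeeping (objective via cyclicity of the trace, condition (b) via $(\de\,\omega)\widetilde{\opt}(\tfrac1\omega\de^*)=\de\opt\de^*$, condition (a) via $\|f\|_{1,m\omega^2}=\|\omega f\|_1=\|g\|_{\omega,1}$) done correctly. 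No gaps.
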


We now show that the Ollivier curvature on $X_1$ can be expressed similarly to Theorem~\ref{thm:dualFormanOmega}.
We say a cycle $z\more x \in X_1$ is shortcutting if 
\[
\sum_{y\less z} \omega(y)< 2\sum_{\substack{y\less z\\ \de \de^*x(y) \neq 0}} \omega(y). \]
We remark that in the case $\omega=1$, shortcutting means length at most five. If $\omega \neq 1$, then shortcutting of a cycle $z$ also depends on the edge $x \less z$.
The relevance of shortcutting cycles is to decide whether $d_\omega(v',w')=\omega(v',v)+\omega(v,w) + \omega(w,w')$. Namely this equality only holds iff there is no shortcutting cycle $z \more (v,w)$.

\begin{theorem}[see Theorem~\ref{thm:OllivierVectorField}]\label{thm:OllivierVectorFieldOmega} Let $(X,\de,m)$ be a cell complex and $\omega:X \to (0,\infty)$ non-degenerate. Let $x \in X_1$.
Suppose all shortcutting cycles containing $x$  are 2-cells. 
Then,
\[
\omega(x)\kappa_\omega(x) = 
\inf_{\substack{h(x)=\omega(x)\\|h|\leq \omega \\ \de x \cdot \de h\leq 0}} \de \de^*h(x).
\]
For the infimum, we only need to consider $h \in C(X_1)$. 
\end{theorem}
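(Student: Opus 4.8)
The plan is to follow the proof of Theorem~\ref{thm:OllivierVectorField} in structure, systematically replacing the combinatorial distance $d$ by the path metric $d_\omega$, the constraint $|h|\le 1$ by $|h|\le\omega$, the normalisation $h(x)=1$ by $h(x)=\omega(x)$, and the ``length at most five'' hypothesis by the shortcutting hypothesis. I would prove the two inequalities separately.

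For ``$\ge$'' I would take an optimal $f\in C(X_0)$ realising $\omega(x)\kappa_\omega(x)$, that is $\de f(x)=\omega(x)$, $|\de f|\le\omega$ and $\de\de^*\de f(x)=\omega(x)\kappa_\omega(x)$, and set $h:=\de f$. Then $\de h=\de\de f=0$, so $\de x\cdot\de h=0\le0$ and $h$ is admissible for the right-hand infimum, giving $\de\de^*h(x)=\de\de^*\de f(x)=\omega(x)\kappa_\omega(x)$. Since $\de\de^*$ only depends on the $C(X_1)$-component, this simultaneously shows that it suffices to minimise over $h\in C(X_1)$.

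The substantial direction is ``$\le$''. Writing $x=(v,w)$, I would fix an optimal $h\in C(X_1)$ (the infimum is attained by compactness), put $h(v',v'):=0$, and define $f(w'):=\min_{v'\in B_1(v)}h(v,v')+d_\omega(v',w')$. As a minimum of $\omega$-Lipschitz functions, $f$ satisfies $|\de f|\le\omega$; and $|h|\le\omega$ together with non-degeneracy ($d_\omega(v,v')=\omega(v,v')$) yields $f(v)=0$ and $f(v')\le h(v,v')$ for $v'\sim v$. The heart of the argument is the claim that $f(w')\ge\omega(x)+h(w,w')$ for all $w'\in B_1(w)$, which, after substituting the minimising $v'$ and using antisymmetry $h(v',v)=-h(v,v')$ and $h(v,w)=\omega(x)$, is equivalent to
\[
h(v',v)+h(v,w)+h(w,w')\le d_\omega(v',w').
\]
I would split into two cases. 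If the three-edge path $(v',v,w,w')$ is already shortest, i.e.\ $d_\omega(v',w')=\omega(v',v)+\omega(v,w)+\omega(w,w')$, then $|h|\le\omega$ gives the inequality termwise. Otherwise $d_\omega(v',w')$ is strictly smaller than this sum, so closing the path with a shortest $w'$--$v'$ connection produces a cycle $z\more x$ whose total $\omega$-length is $d_\omega(v',w')+\omega(v',v)+\omega(v,w)+\omega(w,w')<2\bigl(\omega(v',v)+\omega(v,w)+\omega(w,w')\bigr)$; since the edges of $z$ adjacent to $x$ are exactly $x,(v',v),(w,w')$, this is precisely the defining inequality of a shortcutting cycle. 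By hypothesis $z$ is then a $2$-cell, so $\de x(z)\de h(z)\le0$; orienting $z$ along $v'\to v\to w\to w'$ so that $\de x(z)=1$, and bounding the complementary arc by $|h|\le\omega$ (its $\omega$-length being $d_\omega(v',w')$), yields exactly the displayed inequality.

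With the claim established, taking $v'=v$ gives $f(w)\le\omega(v,w)$, while the claim at $w'=w$ gives $f(w)\ge\omega(v,w)$, so $f(w)=\omega(x)$ and $\de f(x)=\omega(x)$. The claim further yields $f(w')-f(w)\ge h(w,w')$ for $w'\sim w$, while $f(v)-f(v')\ge h(v',v)$ for $v'\sim v$; substituting these into
\[
\de\de^*\de f(x)=\de^*\de f(w)-\de^*\de f(v)=\sum_{w'}\markov(w,w')(f(w)-f(w'))-\sum_{v'}\markov(v,v')(f(v)-f(v'))
\]
bounds it above by $\de\de^*h(x)$, so $\omega(x)\kappa_\omega(x)\le\de\de^*\de f(x)\le\de\de^*h(x)$, the infimum. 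The main obstacle is the case analysis above: I expect the delicate point to be verifying that the shortcutting inequality is calibrated exactly so that the shortest cycle through $(v',v,w,w')$ is guaranteed to lie in $X_2$ precisely when the direct path fails to be geodesic, and then correctly reading off the scalar bound from the signed constraint $\de x(z)\de h(z)\le0$ by fixing the orientation of $z$ and estimating its complementary arc.
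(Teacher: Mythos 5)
Your proposal is correct and follows exactly the route the paper intends: the paper omits the proof of Theorem~\ref{thm:OllivierVectorFieldOmega}, declaring it a straightforward generalization of Theorem~\ref{thm:OllivierVectorField}, and your argument is precisely that generalization, with the one genuinely non-mechanical point (that the shortcutting inequality is calibrated so that the cycle closing $(v',v,w,w')$ by a shortest $w'$--$v'$ arc is shortcutting exactly when the three-edge path fails to be geodesic) worked out correctly.
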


\begin{corollary}[see Corollary~\ref{cor:KequalsF}]\label{cor:KequalsFOmega}
Let $G=(X,\de,m)$ be a 1-dimensional cell complex and $\omega :X \to (0,\infty)$ non-degenerate. %, i.e., a one-dimensional weighted regular CW complex. 
%Let $x=(v,w)$ be an edge for some $v,w \in V$. %, i.e., a one-cell in the CW complex.
Then for $x \in C(X_1)$,
\begin{align*}
\kappa_\omega(x) = \max_{K} F_{K,\omega}(x)
\end{align*}
where the maximum is taken over all cell complexes $K$ having $G$ as 1-skeleton.
\end{corollary}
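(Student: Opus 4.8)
The plan is to prove the weighted coincidence exactly as in the direct proof of Corollary~\ref{cor:KequalsF}, by matching the two dual characterizations that are now available in the $\omega$-setting: the one-form description of Ollivier curvature (Theorem~\ref{thm:OllivierVectorFieldOmega}) and the dual of the Forman maximization (Theorem~\ref{thm:dualFormanOmega}). Both express the quantity of interest as the \emph{same} constrained optimization over one-forms $h \in C(X_1)$, so once I arrange for their hypotheses to hold simultaneously, equating the two left-hand sides yields the identity.

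First I would fix a concrete realization $K$ of $G$: take every shortcutting cycle containing $x$ to be a $2$-cell (with unit weight, say), which produces a legitimate $2$-dimensional regular CW complex by the construction in the subsection on extending the $1$-skeleton. By construction this $K$ satisfies the hypothesis of Theorem~\ref{thm:OllivierVectorFieldOmega}, so
\[
\omega(x)\kappa_\omega(x) = \inf_{\substack{h(x)=\omega(x)\\|h|\leq \omega \\ \de x \cdot \de h\leq 0}} \de \de^*h(x).
\]
Applying Theorem~\ref{thm:dualFormanOmega} to the same $K$ with $\pcell=1$ gives
\[
\max_{K' \leq_1 K}\omega(x)F'_\omega(x) = \min_{\substack{h(x)=\omega(x)\\|h|\leq \omega\\\de h \cdot \de x \leq 0}} \de \de^*h(x).
\]
The feasible sets and objectives on the right coincide (both range over $h \in C(X_1)$ with the same three constraints, and the infimum is attained by compactness), so the two left-hand sides are equal. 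Dividing by $\omega(x)>0$ yields $\kappa_\omega(x) = \max_{K' \leq_1 K} F'_\omega(x)$.

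It then remains to upgrade the maximum over $K' \leq_1 K$ to a maximum over \emph{all} cell complexes sharing the $1$-skeleton $G$. The inequality $\max_K F_{K,\omega}(x) \geq \kappa_\omega(x)$ is immediate, since the complexes $K' \leq_1 K$ form a subfamily. For the reverse direction I would invoke the weighted analog of Corollary~\ref{cor:FleqKappa}, whose proof transfers verbatim and uses no shortcutting hypothesis: for any cell complex with $1$-skeleton $G$ one has $\omega(x)F_\omega(x)\le\inf_h\de\de^*h(x)\le\inf_{\de f}\de\de^*\de f(x)=\omega(x)\kappa_\omega(x)$, where the first step is Theorem~\ref{thm:dualFormanOmega} with $K'=K$ and the second follows by feeding $h=\de f$ (so $\de h=0$) into the one-form problem. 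Hence $F_{K,\omega}(x)\le\kappa_\omega(x)$ for every such $K$, and no choice of higher cells or weights can exceed $\kappa_\omega(x)$; combining the two directions gives $\max_K F_{K,\omega}(x)=\kappa_\omega(x)$.

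I expect the main obstacle to be the bookkeeping around the $\omega$-dependent notion of shortcutting. Unlike the unweighted case, where shortcutting means ``length at most five'' uniformly, here whether a cycle is shortcutting depends on the fixed edge $x$, so I must verify that taking precisely the shortcutting cycles for $x$ as $2$-cells both yields a valid cell complex and fulfills the hypothesis of Theorem~\ref{thm:OllivierVectorFieldOmega}. Once that construction is pinned down, the remaining content is formal, since attainment of the infimum and the equivalence of the two optimization problems are already packaged inside the cited $\omega$-versions of the earlier theorems.
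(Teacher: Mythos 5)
Your proposal is correct and follows essentially the same route as the paper, which proves the unweighted case (Corollary~\ref{cor:KequalsF}) by fixing a complex containing all relevant cycles as $2$-cells and equating the dual linear program of Theorem~\ref{thm:dualForman} with the one-form characterization of Theorem~\ref{thm:OllivierVectorField}, and then declares the $\omega$-version a straightforward generalization. Your extra step upgrading $\max_{K' \leq_1 K}$ to the maximum over all complexes via the weighted analog of Corollary~\ref{cor:FleqKappa} is exactly the justification the paper leaves implicit, and your attention to the $x$-dependence of the shortcutting condition matches the paper's own remark following the corollary.
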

Alternatively, one can also take the maximum over all complexes having only cycles shortcutting $x$ as 2-cells, and $G$ as 1-skeleton.

Having shown identity of Forman and Ollivier curvature on $X_1$, we now show $F \leq \kappa$ for arbitrary cells.

\begin{corollary}[see Corollary~\ref{cor:FleqKappa}]
Let $K=(X,\de,m)$ be a cell complex and $\omega:X \to (0,\infty)$ non-degenerate and $x \in X$.
Then,
\[
F_\omega(x) \leq \kappa_\omega(x).
\]
\end{corollary}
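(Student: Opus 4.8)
The plan is to mirror the unweighted argument from Corollary~\ref{cor:FleqKappa}, replacing its two ingredients by their $\omega$-weighted counterparts. The whole proof reduces to chaining the dual-program identity of Theorem~\ref{thm:dualFormanOmega} with the definition of $\kappa_\omega$, so I expect it to be short and to require no genuinely new idea.

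First I would fix the given complex $K=(X,\de,m)$ and the cell $x \in X_\pcell$, where $\pcell := \dim(x)$. Since $K \leq_\pcell K$ holds trivially, the complex $K$ itself is an admissible competitor in the maximization on the left-hand side of Theorem~\ref{thm:dualFormanOmega}, whence
\[
\omega(x) F_\omega(x) \le \max_{K' \leq_\pcell K} \omega(x) F'_\omega(x) = \min_{\substack{h(x)=\omega(x)\\|h|\leq \omega\\\de h \cdot \de x \leq 0}} \de \de^* h(x).
\]
Next I would relax the feasible set on the right to the one defining Ollivier curvature. Given any $f$ with $\de f(x) = \omega(x)$ and $|\de f| \le \omega$, the one-form $h := \de f$ satisfies $h(x) = \omega(x)$, $|h| \le \omega$ and $\de h = \de\de f = 0$, so in particular $\de x \cdot \de h \le 0$; moreover $\de\de^* h = \de\de^*\de f$. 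Hence every $f$ admissible for $\kappa_\omega$ yields an admissible $h$ with the same objective value, and therefore
\[
\min_{\substack{h(x)=\omega(x)\\|h|\leq \omega\\\de h \cdot \de x \leq 0}} \de \de^* h(x) \le \inf_{\substack{\de f(x)=\omega(x)\\|\de f|\leq \omega}} \de \de^*\de f(x) = \omega(x)\kappa_\omega(x).
\]

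Combining the two displays and dividing by $\omega(x)>0$ gives $F_\omega(x) \le \kappa_\omega(x)$. The only point that needs a moment's care is that the Forman dual in Theorem~\ref{thm:dualFormanOmega} ranges over the larger feasible set $\{\,\de x \cdot \de h \le 0\,\}$ rather than over the exact forms $h \in \de(C(X_0))$ used for $\kappa_\omega$; this containment is exactly what makes the Forman minimum no larger than the Ollivier infimum, and is the conceptual content of the inequality. Note also that no shortcutting-cycle hypothesis enters here, so the bound holds for cells $x$ of every dimension, matching the scope of the unweighted Corollary~\ref{cor:FleqKappa}.
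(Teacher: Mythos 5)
Your proposal is correct and follows essentially the same route as the paper: the paper proves the unweighted Corollary~\ref{cor:FleqKappa} by exactly this two-step chain (Forman curvature bounded by the dual-program minimum over $\{\de x\cdot\de h\le 0\}$, which in turn is bounded by the Ollivier infimum by restricting to exact forms $h=\de f$), and it explicitly states that the weighted versions in Section~\ref{sec:VaryDistance} are obtained by the same straightforward generalization. Your explicit invocation of $K\leq_\pcell K$ as a competitor in Theorem~\ref{thm:dualFormanOmega} and your remark that no shortcutting-cycle hypothesis is needed are both accurate.
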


\subsection{Diameter bounds}

Let $K=(X,\de,m)$ be a cell complex.
We recall that for $x \in X$, we have
\[
\Deg(x) = \sum_{z\more x} \frac{m(z)}{m(x)} + \max_{v\less x}\frac{m(x)}{m(v)} \qquad \mbox{ and } \qquad  D_k =\max_{x \in X_k} \Deg(x)
\]
where we set the maximum to be zero if $x \in X_0$.
We write $\diam_\omega := \max_{v,w \in X_0} d_\omega(v,w)$.
We now give the diameter bound.
\begin{theorem}[see Theorem~\ref{thm:diameterBound}]\label{thm:diameterOmega}
Let $K=(X,\de,m)$ be a cell complex and $\omega:X \to (0,\infty)$ non-degenerate.
Suppose $F_\omega(x) \geq R >0$ for all $x \in X_1$. Then,
\[
\diam_\omega \leq \frac {2\min(D_1,D_0)} R. 
\]
\end{theorem}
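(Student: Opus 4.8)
The plan is to follow the proof of Theorem~\ref{thm:diameterBound} line by line, replacing each ingredient by its $\omega$-counterpart and establishing the two bounds $\diam_\omega \le 2D_0/R$ and $\diam_\omega \le 2D_1/R$ separately, then taking the minimum. The combinatorial machinery transfers directly: Corollary~\ref{cor:KequalsFOmega} plays the role of Corollary~\ref{cor:KequalsF}, the distance $d_\omega$ and $\diam_\omega$ replace $d$ and $\diam$, and I would invoke the $\omega$-general Ollivier (Bonnet--Myers) distance bound from \cite{munch2017ollivier}, namely $d_\omega(v,w) \le (\Deg(v)+\Deg(w))/\kappa_\omega(v,w)$. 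Note that $\Deg$, and hence $D_0$ and $D_1$, are governed by the random-walk weight $m$ and do not involve $\omega$; this is exactly why the statement keeps them unchanged from Theorem~\ref{thm:diameterBound}.

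For the bound $\diam_\omega \le 2D_0/R$, I would first observe that $F_\omega(x)$ for an edge $x \in X_1$ depends only on the $2$-skeleton, so the $2$-skeleton of $K$ is admissible in the maximum of Corollary~\ref{cor:KequalsFOmega}; consequently $\kappa_\omega(x) = \max_{K'} F_{K',\omega}(x) \ge F_\omega(x) \ge R$ for every edge. Since edge-wise lower curvature bounds propagate to all vertex pairs, $\kappa_\omega(v,w) \ge R$ for all $v \ne w \in X_0$, and substituting $\Deg(v),\Deg(w) \le D_0$ into the distance bound gives the claim.

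For the bound $\diam_\omega \le 2D_1/R$, I would take a $d_\omega$-geodesic $v_0 \sim \dots \sim v_n$ realizing $\diam_\omega$ and modify the weights of the $1$-cells incident to $v_0$ and $v_n$ by the construction $\widetilde m$ of Theorem~\ref{thm:diameterBound} (with the $\eps \to 0$ regularization where a modified weight vanishes). The point that legitimizes this in the weighted setting is that $d_\omega$, and hence $\diam_\omega$, is determined by $\omega$ alone and is therefore untouched when we alter the edge weights $m$. I would then check that $\widetilde F_\omega(v_0,v_1) \ge F_\omega(v_0,v_1)$ and $\widetilde F_\omega(v_{n-1},v_n) \ge F_\omega(v_{n-1},v_n)$ while the interior path edges are unchanged (using that $v_1$, resp.\ $v_{n-1}$, is the only path-neighbour of $v_0$, resp.\ $v_n$). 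Applying Corollary~\ref{cor:KequalsFOmega} to $(X,\de,\widetilde m)$ then yields $\widetilde\kappa_\omega \ge R$ on every edge of the geodesic, hence $\widetilde\kappa_\omega(v_0,v_n) \ge R$; the distance bound in the modified complex gives $\diam_\omega \le (\widetilde\Deg(v_0)+\widetilde\Deg(v_n))/R$, and a telescoping computation over the $2$-cells through $(v_0,v_1)$ identifies $\widetilde\Deg(v_0) \le \Deg(v_0,v_1) \le D_1$, and likewise for $v_n$. Taking the minimum of the two bounds finishes the proof.

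The main obstacle I expect lies in the two weighted curvature-monotonicity facts. First, that the $\widetilde m$-modification does not decrease the Forman curvature on the boundary edges: in $F_\omega$ the factors $\omega(y)/\omega(x)$ enter as fixed positive multipliers on the off-diagonal terms $|Hy(x)|$, so the term-by-term inequalities of the $\omega=1$ case should survive, but this must be re-derived rather than merely quoted. Second, and more delicate, is verifying that re-weighting $m$ near the endpoints leaves the Forman curvature of the interior geodesic edges \emph{exactly} unchanged; this relies on the $d_\omega$-geodesic structure together with non-degeneracy of $\omega$ to guarantee that no re-weighted edge shares a vertex or a common $2$-cell with an interior edge. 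The degree telescoping $\widetilde\Deg(v_0) \le D_1$ itself is a routine but slightly intricate bookkeeping step, which I would carry out by summing the modified weights over the $2$-cells incident to $(v_0,v_1)$.
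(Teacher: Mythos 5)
Your proposal is correct and takes essentially the same approach as the paper: the paper explicitly omits this proof, declaring the $\omega$-versions to be straightforward generalizations of the $\omega=1$ case, and you reconstruct precisely the proof of Theorem~\ref{thm:diameterBound} with the expected substitutions (Corollary~\ref{cor:KequalsFOmega} for Corollary~\ref{cor:KequalsF}, the same endpoint modification $\widetilde m$, the same degree telescoping). The one step deserving extra scrutiny --- in your writeup and equally in the paper's own statement --- is the claim that the Ollivier distance bound for the metric $d_\omega$ still reads $d_\omega(v,w)\leq(\Deg(v)+\Deg(w))/\kappa_\omega(v,w)$ with the $\omega$-free degrees, since the general bound from \cite{munch2017ollivier} naturally carries the $d_\omega$-weighted quantity $\sum_{v'}Q(v,v')d_\omega(v,v')$ in the numerator; this is inherited from the theorem as stated rather than introduced by you.
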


\subsection{Original Forman curvature as a special case}\label{sec:OriginalForman}

Forman defines the curvature for a cell complex with cell weights $w$ as
\[
\frac{F_{\mbox{original}}(x)}{w(x)} := \sum_{v \less x}\frac{w(v)}{w(x)} + \sum_{z \more x} \frac{w(x)}{w(z)} - \sum_{y\neq x} \left|   \sum_{v \less x,y}  \frac{w(v)}{\sqrt{w(y)w(x)}} - \sum_{z\more x,y}  \frac{\sqrt{w(x)w(y)}}{w(z)}  \right|
\]
for $x \in X_1$.
Our curvature with cell weights $m$ is given by
\begin{align*}
F_\omega (x) = \sum_{v \less x}\frac{m(x)}{m(v)} + \sum_{z \more x} \frac{m(z)}{m(x)} - \sum_{y\neq x} \frac {\omega(y)}{\omega(x)}\left|   \sum_{v \less x,y} \frac{m(y)}{m(v)} - \sum_{z\more x,y}  \frac{m(z)}{m(x)}  \right|.
\end{align*}
With the choices
\begin{itemize}
\item $m:=1/w$,
\item $\omega := {\sqrt{w}}$,
\end{itemize}
we have

\[
F_\omega (x) = \sum_{v \less x}\frac{w(v)}{w(x)} + \sum_{z \more x} \frac{w(x)}{w(z)} - \sum_{y\neq x} \frac {\sqrt{w(y)}}{\sqrt{w(x)}}\left|   \sum_{v \less x,y} \frac{w(v)}{w(y)} - \sum_{z\more x,y}  \frac{w(x)}{w(z)}  \right| = \frac{F_{\mbox{original}}(x)}{w(x)}.
\]
This identity generalizes to higher order cells by taking the signs of $\de$ into account.
The choice $m=1/w$ comes from the fact that the coboundary operator definitions differ by a factor of the cell weights.
The choice $\omega=\sqrt{w}$ comes from the fact that Forman uses a specific change of basis making the Hodge Laplacian selfadjoint with respect to the constant weight.

\printbibliography

\appendix
\section{Mathematica source code for computing Forman curvature}
We include the source code of a Mathematica \cite{WMathematicaOnline} program to calculate the optimal minimal Forman curvature.
Construction of the graph given in the example (see Section~\ref{sec:Example}):
\begin{verbatim}
Adj[x_,y_]  =  KroneckerDelta[Abs[x-y],1] + KroneckerDelta[Abs[x-y],2];
A           =  Table[Adj[i,j],{i,1,6},{j,1,6}];
G           =  AdjacencyGraph[A];
\end{verbatim}
%Alternatively, construction of a random graph:
%\begin{verbatim}
%G  =  RandomGraph[{7,10}];
%\end{verbatim}
The next step is to label the cells and define the coboundary operator $\delta$ called d0 and d1 respectively. This part has to be run before one can compute the curvature. Outputs are the graph with labeled vertices and edges, as well as the list of cycles of length at most five:
\begin{verbatim}
GraphPlot[G, VertexLabels -> Table[i -> Subscript[v, i], {i, VertexCount[G]}],
             EdgeLabels->Table[EdgeList[G][[i]]->Subscript[x,i],{i,EdgeCount[G]}]]
GDirected                 =   DirectedGraph[G,"Acyclic"];
IncList                   =   IncidenceList[GDirected,VertexList[G]];
CycleList                 =   FindCycle[G,5,All]
EdgeToIncidence[x_,y_]   :=  (KroneckerDelta[x[[1]],y[[1]]]*KroneckerDelta[x[[2]],y[[2]]] 
                            - KroneckerDelta[x[[1]],y[[2]]]*KroneckerDelta[x[[2]],y[[1]]])
EdgeToIncidenceSum[x_,y_]:=   Sum[EdgeToIncidence[x[[k]],y],{k,Length[x]}]
d0                        =   Transpose[IncidenceMatrix[GDirected]];
d1                        =   Table[EdgeToIncidenceSum[CycleList[[i]],IncList[[j]]],
                              {i,Length[CycleList]},{j,Length[IncList]}];
\end{verbatim}
Direct computation of the optimal lower Forman curvature bound. The outputs are the curvature bound and the list of cycle weights:
\begin{verbatim}
Hodge[m2var_]   :=  Transpose[d1].(m2var*d1)  + d0.Transpose[d0]
Forman[m2var_]  :=  2*Diagonal[Hodge[m2var]] - Total[Abs[Hodge[m2var]]]
m2List           =  Array[m2,Length[CycleList]];
MaxFormanH       =  NMaximize[{Min[Forman[m2List]],m2List>=0},m2List];
MaxFormanH[[1]]
Chop[m2List/.MaxFormanH[[2]],10^-8]
\end{verbatim}
Computation of the optimal lower Forman curvature bound via Theorem~\ref{thm:maxminForman}. Outputs are curvature bound and the operator $J$:
\begin{verbatim}
J           =  Array[j,{EdgeCount[G],EdgeCount[G]}];
a1          =  Diagonal[J] -J;
a2          =  Diagonal[J] + J;
b           =  Diagonal[d1.J.Transpose[d1]];
c           =  Tr[J];
MaxFormanJ  =  NMinimize[{Tr[d0.Transpose[d0].J],a1>=0,a2>=0,b<=0,c==1},Flatten[J]];
MaxFormanJ[[1]]
MatrixForm[Chop[J/.MaxFormanJ[[2]]]]
\end{verbatim}

\end{document}